 \def\LaTeX{\leavevmode L\raise.42ex
   \hbox{\kern-.3em\size{\sf@size}{0pt}\selectfont A}\kern-.15em\TeX}
\newcommand{\BibTeX}{{\rm B\kern-.05em{\sc
i\kern-.025emb}\kern-.08em\TeX}}
\newtheorem{col}{Corollary}[section]
\newtheorem{thm}{Theorem}[section]
\newtheorem{defn}{Definition}
\newtheorem{rem}[thm]{Remark}
\theoremstyle{defn}
\newtheorem{lem}[thm]{Lemma}
\newtheorem{assump}[thm]{Assumption}
\numberwithin{equation}{section}
\begin{document}

\title[Sampling by average values and average  splines  on Dirichlet spaces]{Sampling by averages and average  splines   on Dirichlet  spaces and  on combinatorial graphs}

\author{Isaac Z. Pesenson}
\address{Department of Mathematics, Temple University,
Philadelphia, PA 19122} \email{pesenson@temple.edu}

\maketitle

\begin{abstract}
In the framework of a  strictly local regular Dirichlet space ${\bf X}$ we introduce the subspaces   $PW_{\omega},\>\>\omega>0,$  of  Paley-Wiener  functions of bandwidth $\omega$. It is shown that every function in 
$PW_{\omega},\>\>\omega>0,$ is uniquely determined by its average values over a family of balls $B(x_{j}, \rho),\>x_{j}\in {\bf X},$  which form  an admissible   cover of ${\bf X}$ and whose radii are comparable to $\omega^{-1/2}$. The entire development heavily depends on some local and global Poincar\'e-type  inequalities. 
In the second part of the paper we realize the same idea in the setting of  a weighted combinatorial finite or infinite countable graph  $G$.
We have to treat the case of graphs separately  since  the Poincar\'e inequalities we are using on them are somewhat different from the Poincar\'e inequalities in the first part. 

\end{abstract}
\section{Introduction}

We consider a metric measure space ${\bf X}$ with doubling property and a self-adjoint operator $\mathcal{L}$ in $L_{2}({\bf X})$ which governs local geometry of ${\bf X}$ through a Poincare-type inequality. In fact, we are working in the environment  of the so-called strictly local regular Dirichlet spaces \cite{   A}, \cite{BH},  \cite{      FOT}, \cite{S1}-\cite{ S3}.  Following \cite{Pes88}  -\cite{Pes01} we introduce the subspaces     $PW_{\omega}(\mathcal{ L}),\>\>\omega>0,$ of  Paley-Wiener  functions in $L_{2}({\bf X})$ and then  show that every function $f$  in  $PW_{\omega}(\mathcal{ L}),\>\>\omega>0,$ is uniquely determined by its average values over a family of balls $B(x_{j}, \rho),\>x_{j}\in {\bf X},$  which form  an admissible   cover of ${\bf X}$ and whose radii are comparable to $\omega^{-1/2}$. Reconstruction methods for reconstruction of an $f\in PW_{\omega}(\mathcal{L})$ from appropriate set of its averages are introduced. One method is using language of Hilbert frames. Another one is using average variational interpolating splines which are constructed in the setting of  metric measure spaces.  It is shown that every Paley-Wiener function  is a limit of specific variational splines which have the same averages over balls $B(x_{j},\rho)$ as $f$.

In the second part of the paper we realize the same idea in the setting of  a weighted combinatorial finite or infinite countable graph  $G$.  Although the second part is very close to the first one ideologically, it is somewhat different technically  and can be read independently of the first part. 

In both parts of the paper we strongly rely on the local Poincar\'e (\ref{Poinc-0}) and a global Poincar\'e  (\ref{GPin}) and a local Poincar\'e-type (\ref{Poinc-g}) and a global Poincar\'e-type (\ref{main-ineq}) inequalities.  It is worth to notice that these inequalities in the case of graphs are not exactly the same as in the case of Dirichlet spaces. It explains why we treat graphs separately. A detailed comparison of Poincar\'e inequalities we are using on Dirichlet spaces and on graphs  is given in Remark \ref{exp}.

It should me mentioned that the idea to use certain local information (other than values at sampling point) for reconstruction of bandlimited functions on graphs was already explored in   \cite{W}. However, the results and methods of \cite{W} and of our paper are very different.  We also want to mention that methods of the present paper are similar to methods of our paper \cite{Pes04b} in which sampling by average values and average splines were developed on Riemannian manifolds.

\section{Metric measure spaces and Poincare inequality}\label{assumption}

In this section we describe a class of metric measure spaces  we are going to work with. Actually, we assume many properties but the most principle of them are: the doubling property (\ref{doubling}), existence of a self-adjoint non-negative operator in the corresponding $L_{2}({\bf X})$, and  the Poincar\'e inequality (\ref{Poinc-0}). 

{\bf Main assumptions about metric measure space}. 

A homogeneous space in the sense of Coifman and Weiss \cite {CW1}, \cite{CW2} is a triple $(\mathbf{X}, dist, \mu)$  where $\mathbf{X}$ is a set which is equipped with a metric $dist$ and a positive Radon measure $\mu$ such that the so-called  doubling condition holds. Namely, there exists a $D>0$ such that for every open ball $B(x,r)$ with center $x\in \mathbf{X}$ and radius $r>0$ 
\begin{equation}\label{doubling}
0<\left|B(x, 2r)\right|\leq 2^{D}\left|B(x, r)\right|,\>\>\> \quad \mbox{where} \quad \left|B(x, s)\right|=\mu\left(B(x, s)\right).
\end{equation}
It is very common to assume that $dist$  defines a locally compact separable topology on ${\bf X}$. 

In the first part of our paper we will work with the so-called \textit{strictly local regular Dirichlet spaces.} We just outline the main points of this framework and refer to \cite{   A}, \cite{BH},  \cite{      FOT}, \cite{S1}-\cite{ S3} where all the   missing  details can be found.  

The major assumption is that  the real space $L_{2}(\mathbf{X}, \mu)$ is equipped with a non-negative self-adjoint operator $\mathcal{L}$ whose domain $\mathcal{D}(\mathcal{L})$ is dense in $L_{2}(\mathbf{X}, \mu)$.  We consider associated  symmetric bilinear  form $\mathcal{E}$ with domain $\mathcal{D}(\mathcal{E})\supset\mathcal{D}({\mathcal{L}})$ which is defined by the formula
$$
\mathcal{E}(f, g)=\left<\mathcal{L}f, g\right>=\left<f, \mathcal{L}g\right>=\mathcal{E}(g, f),\>\>\>f,g\in \mathcal{D}({\mathcal{E}}).
$$
Assuming that $\mathcal{E}$ is a strongly local regular Dirichlet form one can show existence of a signed measure $d\Gamma$ which is a bilinear map defined on $\mathcal{D}(\mathcal{E})\times\mathcal{D}(\mathcal{E})$ such that 
$$
\mathcal{E}(f,g)=\int_{{\bf X}}d\Gamma(f,g),\>\>\>f,g\in \mathcal{D}(\mathcal{E}).
$$
In particular, for the non-negative square root $\mathcal{L}^{1/2}$ one has
\begin{equation}\label{1/2}
\|\mathcal{L}^{1/2}f\|^{2}=\mathcal{E}(f,f)=\int_{{\bf X}}d\Gamma(f,f),\>\>\>f\in \mathcal{D}(\mathcal{E}).
\end{equation}

\begin{rem}
In one of the most "real life" situations when ${\bf X}$ is a Riemannian manifold and $\mathcal{L}$ is the corresponding Laplace-Beltrami operator the measure $d\Gamma(f,g)$ is given by a formula
\begin{equation}\label{Gamma}
d\Gamma(f,g)(x)=\Gamma(f,g)(x)d\mu,
\end{equation}
where the function $\Gamma(f,g)$ for smooth compactly supported $f$ and $ g$ is defined by the formula
\begin{equation}\label{nabla}
\Gamma(f,g)=\frac{1}{2}\left(\mathcal{L}(fg)-f\mathcal{L}g-g\mathcal{L}f\right).
\end{equation}
In fact, the last formula holds even in general case if one imposes some extra conditions on the domain of $\mathcal{L}$.
\end{rem}

Our next important assumption is about {\bf a local Poincar\'e inequality}.

\begin{assump} \label{ass1}There exists  constant $C>0$ such that for any $f\in \mathcal{D}(\mathcal{L})$ and any  ball $B(x, r),\>\>x\in \mathbf{X},$ of a sufficiently small radius $r$ the following Poincare inequality holds
\begin{equation}\label{Poinc-0}
\int_{B(x,r)}|f(y)-f_{B}|^{2}d\mu(y)\leq Cr^{2}\int_{B(x,r)}\Gamma(f,f)d\mu,
\end{equation}
where 
$$f
_{B}=\frac{1}{|B(x,r)|}\int_{B(x,r)}f(x)d\mu(x).
$$
\end{assump}

The above assumptions constitute our framework for the next section. In section \ref{splines} we will add another assumption to this list.

\section{A global Poincar\'e inequality }

\subsection{A covering lemma}

\begin{lem}\label{covering}
There exists a constant $N=N(\mathbf{X})$ and for every sufficiently small $\rho>0$ there exists a set of points $\mathbf{X}_{\rho}=\{x_{j}\},\>x_{j}\in {\bf \mathbf{X}}$, such that

	\begin{enumerate}

	\item  balls $B(x_{j}, \rho/2)$ are disjoint 
	$
	B(x_{j},\rho/2)\cap B(x_{i}, \rho/2)=\emptyset,\>\>\>\>\>j\neq i,
	$ 
	\item  multiplicity of the cover $\left\{B(x_{j}, \rho)\right\}$ is not greater than $\leq N(\mathbf{X})$.

\end{enumerate}
\end{lem}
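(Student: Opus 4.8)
The plan is to take $\mathbf{X}_\rho$ to be a \emph{maximal $\rho$-separated set} of points of $\mathbf{X}$, and then to read off the two assertions from the triangle inequality and from a packing estimate based on the doubling property (\ref{doubling}). Concretely, fix $\rho>0$ (no smallness is actually needed for this statement, since (\ref{doubling}) holds for all radii) and consider the collection of all $S\subseteq\mathbf{X}$ such that $dist(x,y)\ge\rho$ for any two distinct $x,y\in S$, ordered by inclusion. The union of a chain of such sets again has this property, so by Zorn's lemma there is a maximal one, which I denote $\mathbf{X}_\rho=\{x_j\}$. Since the balls $B(x_j,\rho/2)$ will turn out to be disjoint and of positive measure (the latter a consequence of (\ref{doubling})), and $\mu$ is $\sigma$-finite, the index set is at most countable.

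For (1): if $y\in B(x_i,\rho/2)\cap B(x_j,\rho/2)$ with $i\ne j$, then $dist(x_i,x_j)\le dist(x_i,y)+dist(y,x_j)<\rho$, contradicting the separation of $\mathbf{X}_\rho$; hence the balls $B(x_j,\rho/2)$ are pairwise disjoint. Moreover, if some $x\in\mathbf{X}$ had $dist(x,x_j)\ge\rho$ for every $j$, the set $\mathbf{X}_\rho\cup\{x\}$ would still be $\rho$-separated, contradicting maximality; so every $x\in\mathbf{X}$ lies in some $B(x_j,\rho)$, i.e.\ $\{B(x_j,\rho)\}$ covers $\mathbf{X}$.

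For (2): fix $x\in\mathbf{X}$ and let $x_{j_1},\dots,x_{j_k}$ be the centers with $x\in B(x_{j_i},\rho)$. Then each $x_{j_i}\in B(x,\rho)$, so $B(x_{j_i},\rho/2)\subseteq B(x,3\rho/2)$, and as these $k$ balls are disjoint,
\[
\sum_{i=1}^{k}\bigl|B(x_{j_i},\rho/2)\bigr|=\Bigl|\bigcup_{i=1}^{k}B(x_{j_i},\rho/2)\Bigr|\le\bigl|B(x,3\rho/2)\bigr|.
\]
Conversely $B(x,3\rho/2)\subseteq B(x_{j_i},5\rho/2)\subseteq B(x_{j_i},4\rho)$, and since $4\rho=2^{3}(\rho/2)$, three applications of (\ref{doubling}) give $\bigl|B(x,3\rho/2)\bigr|\le 2^{3D}\bigl|B(x_{j_i},\rho/2)\bigr|$ for every $i$. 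Inserting this in the previous display yields $k\,2^{-3D}\bigl|B(x,3\rho/2)\bigr|\le\bigl|B(x,3\rho/2)\bigr|$, and because $0<|B(x,3\rho/2)|<\infty$ we obtain $k\le 2^{3D}$; in particular there were only finitely many such centers. Hence (2) holds with $N(\mathbf{X})=2^{3D}$, depending only on the doubling constant $D$.

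The argument is essentially routine; the only places calling for a little care are verifying that the maximal separated family is genuinely countable (so that additivity of $\mu$ may be used), and bookkeeping the radii in the iteration of (\ref{doubling}) so that the final bound $N(\mathbf{X})$ comes out independent of $\rho$.
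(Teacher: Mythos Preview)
Your argument is correct and follows essentially the same strategy as the paper: construct a maximal packing (you phrase it as a maximal $\rho$-separated set, the paper as a maximal disjoint family of $\rho/2$-balls) and then bound the overlap of the $\rho$-balls by a volume comparison based on the doubling condition. Your bookkeeping is a bit tighter---you obtain $N(\mathbf{X})=8^{D}$, whereas the paper passes through the auxiliary inequalities (\ref{1a})--(\ref{1b}) and ends with $80^{D}$---and you also explicitly address existence and countability of $\mathbf{X}_\rho$ via Zorn's lemma, which the paper leaves implicit.
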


\begin{proof}

Let us choose a family of disjoint balls $B(x_{i},\rho/2)$ such
that there is no ball $B(x,\rho/2), x\in M,$ which has empty
intersections with all balls from our family. Then the family
$B(x_{i},\rho)$ is a cover of $M$. Every ball from the family
$\{B(x_{i}, \rho)\}$, that has non-empty intersection with a
particular ball $B(x_{j}, \rho)$ is contained in the ball
$B(x_{j}, 2\rho)$. Since any two balls from the family
$\{B(x_{i},\rho/2)\}$ are disjoint, it gives the following
estimate for the index of multiplicity $N$ of the cover
$\{B(x_{i},\rho)\}$:
\begin{equation}\label{N}
N\leq \sup_{y\in \mathbf{X}}\frac{|B(y, 2\rho)|}{\inf_{x\in B(y, 2\rho)|}|B(x, \rho/2)|}.
\end{equation}
Let us notice that for any $m\in \mathbb{N}$ the doubling property (\ref{doubling}) implies 
$$
\left|B(x,  2^{m}r)\right|\leq 2^{mD}|B(x, r)|,\>\>\>x\in \mathbf{X}, \>\>r>0, \>\>\rho>0, 
$$
and if for a $\rho>0$ one has $2^{m}\leq \rho<2^{m+1}$ then
\begin{equation}\label{1a}
\left|B(x,  \rho r)\right|< |B(x, 2^{m+1}r)|\leq 2^{(m+)D}|B(x, r)|\leq 2^{D}\rho^{D}|B(x, r)|.
\end{equation}
Now, the last inequality along with the  obvious  inclusion $B(x,r)\subset B(y, r+dist(x,y))$ implies
\begin{equation}\label{1b}
\left|B(x, r)\right|\leq 2^{D}\left(1+dist(x,y)/r\right)^{D}|B(y, r)|,\>\>\>x,y\in \mathbf{X}, \>\>r>0.
\end{equation}
Let's return to  the inequality (\ref{N}).         Since $x\in B(y, 2\rho)$ we have $dist(x,y)\leq 2\rho$ and according to (\ref{1b}) 
$$
|B(y, \rho/2)|\leq 2^{D}\left( 1+\frac{dist(x,y)}{\rho/2}\right)^{D} |B(x, \rho/2)|\leq 10^{D} |B(x, \rho/2)|.
$$
In other words, for any $y\in \mathbf{X}$ and every $x\in B(y, 2\rho)$  one has
\begin{equation}\label{reverse}
|B(x, \rho/2)|\geq 10^{-D}|B(y, \rho/2)|.
\end{equation}
This inequality along with (\ref{1a}) allows to continue estimation
of $N{\bf X})$:

$$N({\bf X})\leq  10^{D} 
\sup_{y\in \mathbf{X}}\frac{|B(y,2\rho)|}{|B(x,\rho/2)|} \leq
10^{D}\sup_{y\in \mathbf{X}}\frac{8^{D}|B(y,\rho/2)|}{|B(y, \rho/2)|}=80^{D}.
$$
Lemma is proved.

\end{proof}

\begin{defn}\label{def-cov}
Every set   $X_{\rho}=\{x_{j}\},\>x_{j}\in {\bf \mathbf{X}},\>\rho>0,$ which satisfies properties of the previous lemma will be called a {\bf metric $\rho$-lattice}.
\end{defn}

\subsection{A global Poincar\'e inequality and its implications}

We will need the  inequality (\ref{in}) below. 
One has  for all $\alpha>0$
$$
|A|^{2}\leq \left(|A-B|+|B|\right)^{2}\leq |A-B|^{2}+2|A-B||B|+|B|^{2},
$$
and
$$
2|A-B||B|\leq\alpha^{-1}|A-B|^{2}+\alpha|B|^{2},\>\>\>\>\alpha>0,
$$
which imply  the inequality 
\begin{equation}\label{in}
(1+\alpha)^{-1}|A|^{2}\leq\alpha^{-1}|A-B|^{2}+|B|^{2},\>\>\alpha>0.
\end{equation}
Let $X_{\rho}=\{x_{j}\}$ be a $\rho $-lattice and 
$$
 \zeta_{j}(x)=|B(x_{j}, \rho)|^{-1}\chi_{j}(x),
 $$ 
 where $\chi_{j}$ is characteristic function of $B(x_{j}, \rho)$.
We have
$$
|f|^{2}\leq (1+\alpha)\left|f-\left<f, \zeta_{j}\right>\right|^{2}+\frac{1+\alpha}{\alpha}\left|\left<f, \zeta_{j}\right>\right|^{2},\>\>\>\alpha>0,
$$
and then
$$
\|f\|^{2}\leq \sum_{j}\int_{B(x_{j}, \rho)}|f|^{2}\leq (1+\alpha)\sum_{j}\int_{B(x_{j}, \rho)}|f-     \left<f, \zeta_{j}\right>|^{2}+      \frac{1+\alpha}{\alpha}  \sum_{j}| \left<f, \xi_{j}\right>|^{2},
$$
where 
\begin{equation}\label{xi}
\xi_{j}=|B(x_{j}, \rho)|^{-1/2}\chi_{j}.
\end{equation}
Since by the very definition of a $\rho$-lattice the corresponding cover by balls $B(x_{j}, \rho)$ has finite multiplicity $\leq N(\mathbf{X})$ we obtain according to (\ref{Poinc-0}) and (\ref{1/2})
$$
\sum_{j}\int_{B(x_{j},\rho) }|f-          \left<f, \zeta_{j}\right>    |^{2}\leq C\rho^{2}\sum_{j}\int_{B(x_{j},\rho) }d\Gamma(f,f)\leq 
$$
$$
CN\rho^{2}\int_{\bf X }\left|\mathcal{L}^{1/2}f\right|^{2}\leq  CN(\mathbf{X})\rho^{2}\|\mathcal{L}^{1/2}f\|^{2}.
$$
Thus we can formulate the following result.
\begin{thm}\label{GPI}For every $\rho$-lattice $\{x_{j}\}$ and the corresponding set of functions $\{\xi_{j}\}$ defined in (\ref{xi}) the following inequality holds for any $f\in \mathcal{D}(\mathcal{L}^{1/2})$
\begin{equation}\label{GPin}
\|f\|^{2}\leq (1+\alpha)CN(\mathbf{X})\rho^{2}\|\mathcal{L}^{1/2}f\|^{2}+  \frac{1+\alpha}{\alpha}  \sum_{j}| \left<f, \xi_{j}\right>|^{2},\>\>\>\alpha>0,
\end{equation}
where $C$ is the same as in (\ref{Poinc-0}).
We call this inequality  {\bf a global Poincar\'e inequality}.
\end{thm}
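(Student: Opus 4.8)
The plan is to derive (\ref{GPin}) by combining the elementary splitting (\ref{in}) with the local Poincar\'e inequality (\ref{Poinc-0}), summed over a metric $\rho$-lattice and controlled by the finite multiplicity supplied by Lemma \ref{covering}. Throughout I would fix a $\rho$-lattice $\{x_{j}\}$ with $\rho$ small enough that (\ref{Poinc-0}) applies on every ball $B(x_{j},\rho)$, and write $\zeta_{j}=|B(x_{j},\rho)|^{-1}\chi_{j}$, so that $\langle f,\zeta_{j}\rangle$ is precisely the average $f_{B(x_{j},\rho)}$ appearing in Assumption \ref{ass1}.

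First I would apply (\ref{in}) pointwise on $B(x_{j},\rho)$ with $A=f(y)$ and $B=\langle f,\zeta_{j}\rangle$, integrate over $B(x_{j},\rho)$, and sum in $j$. Since the balls $\{B(x_{j},\rho)\}$ cover $\mathbf{X}$, the left-hand side dominates $\|f\|^{2}$, and the two resulting sums are
\[
(1+\alpha)\sum_{j}\int_{B(x_{j},\rho)}\bigl|f-\langle f,\zeta_{j}\rangle\bigr|^{2}\,d\mu \quad\text{and}\quad \frac{1+\alpha}{\alpha}\sum_{j}\int_{B(x_{j},\rho)}\bigl|\langle f,\zeta_{j}\rangle\bigr|^{2}\,d\mu .
\]
For the first sum I would invoke (\ref{Poinc-0}) ball by ball to bound it by $(1+\alpha)C\rho^{2}\sum_{j}\int_{B(x_{j},\rho)}d\Gamma(f,f)$, and then use the multiplicity bound $\le N(\mathbf{X})$ from Lemma \ref{covering} together with (\ref{1/2}) to replace $\sum_{j}\int_{B(x_{j},\rho)}d\Gamma(f,f)$ by $N(\mathbf{X})\|\mathcal{L}^{1/2}f\|^{2}$. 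For the second sum, $\langle f,\zeta_{j}\rangle$ is constant on $B(x_{j},\rho)$, so its integral over that ball equals $|B(x_{j},\rho)|\,|\langle f,\zeta_{j}\rangle|^{2}=|\langle f,\xi_{j}\rangle|^{2}$ with $\xi_{j}$ as in (\ref{xi}). Collecting the two estimates gives exactly (\ref{GPin}).

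There is no deep obstacle here; the steps that demand care are bookkeeping ones. The main thing to get right is the normalization, namely $\langle f,\xi_{j}\rangle=|B(x_{j},\rho)|^{1/2}\langle f,\zeta_{j}\rangle$, so that integrating the constant average over the ball reproduces $|\langle f,\xi_{j}\rangle|^{2}$ exactly, with no stray volume factors. One should also note that (\ref{Poinc-0}) was assumed only on $\mathcal{D}(\mathcal{L})$ while the theorem is stated on $\mathcal{D}(\mathcal{L}^{1/2})=\mathcal{D}(\mathcal{E})$; since $\mathcal{D}(\mathcal{L})$ is a form core, both sides of (\ref{GPin}) are continuous in the $\mathcal{E}$-norm and the inequality extends by density. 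Finally, $\alpha>0$ stays free throughout, which is what makes (\ref{GPin}) flexible for the later applications where one optimizes the trade-off between its two terms.
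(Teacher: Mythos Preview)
Your proposal is correct and follows essentially the same route as the paper: apply the elementary splitting (\ref{in}) pointwise on each ball, sum over the lattice, bound the oscillation term by the local Poincar\'e inequality (\ref{Poinc-0}) together with the multiplicity bound $N(\mathbf{X})$ from Lemma \ref{covering}, and rewrite the constant-average term via $|B(x_{j},\rho)|\,|\langle f,\zeta_{j}\rangle|^{2}=|\langle f,\xi_{j}\rangle|^{2}$. Your observation about the density step---that (\ref{Poinc-0}) is assumed on $\mathcal{D}(\mathcal{L})$ and then extended to $\mathcal{D}(\mathcal{L}^{1/2})$ by form-core approximation---is a point the paper leaves implicit, so you have actually filled in a small gap.
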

As a consequence we obtain the following statement "of many zeros".
\begin{thm} For every $\rho$-lattice $\{x_{j}\}$ and the corresponding set of functions $\{\xi_{j}\}$ defined in (\ref{xi}) the following inequality holds for any $f\in \mathcal{D}(\mathcal{L}^{1/2})$ such that $ \left<f, \xi_{j}\right>=0$ for all $j$:
\begin{equation}\label{zeros-ineq}
\|f\|^{2}\leq CN(\mathbf{X})\rho^{2}\|\mathcal{L}^{1/2}f\|^{2},
\end{equation}
where $C$ is the same as in (\ref{Poinc-0}).
\end{thm}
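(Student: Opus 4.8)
The plan is to deduce this directly from the global Poincar\'e inequality (\ref{GPin}) of Theorem \ref{GPI}, specialized to the hypothesis $\left<f,\xi_{j}\right>=0$ for all $j$. Concretely, I would apply (\ref{GPin}) to the given $f\in\mathcal{D}(\mathcal{L}^{1/2})$ and observe that the entire sum $\sum_{j}|\left<f,\xi_{j}\right>|^{2}$ on the right-hand side vanishes by assumption, so that for every $\alpha>0$ one is left with the estimate
$$
\|f\|^{2}\leq (1+\alpha)\,CN(\mathbf{X})\rho^{2}\|\mathcal{L}^{1/2}f\|^{2}.
$$

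The second and final step is to note that the inequality above holds for \emph{every} $\alpha>0$ with all other quantities fixed and independent of $\alpha$. Hence one may take the infimum over $\alpha>0$ on the right-hand side, equivalently let $\alpha\to 0^{+}$, to obtain $\|f\|^{2}\leq CN(\mathbf{X})\rho^{2}\|\mathcal{L}^{1/2}f\|^{2}$, which is exactly (\ref{zeros-ineq}). Since $C$ in (\ref{GPin}) is the constant of (\ref{Poinc-0}), the same is true in the conclusion.

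There is essentially no obstacle here: the statement is a formal corollary of Theorem \ref{GPI}, and the only point requiring a word of justification is the legitimacy of passing to the limit $\alpha\to 0^{+}$, which is immediate because the bound is uniform in $\alpha$. One could even skip the limiting argument and simply remark that the inequality with the factor $1+\alpha$ holds for all $\alpha>0$; taking, say, $\alpha\to 0$ is just the cleanest way to record the sharpest form.
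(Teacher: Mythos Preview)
Your proposal is correct and matches the paper's treatment: the paper presents this theorem simply as ``a consequence'' of Theorem~\ref{GPI} without any further argument, and your derivation via (\ref{GPin}) with the sampling term vanishing and $\alpha\to 0^{+}$ is exactly the intended reading. One could equivalently note that when all $\langle f,\xi_{j}\rangle=0$ the algebraic splitting with parameter $\alpha$ in the proof of Theorem~\ref{GPI} is unnecessary and the Poincar\'e estimate applies directly, but this yields the same bound.
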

To obtain another important consequence of Theorem \ref{GPI} we need the following lemma which was proved in \cite{Pes00}.
\begin{lem}\label{key}
 If  $T$ is a self-adjoint operator in a Hilbert space
 and for some $\varphi$ in the
domain of $T$
$$
\|\varphi\|\leq A+ a\|T\varphi\|, \>\>\>a>0,
$$
then for all $k=2^{l}, \>l=0,1,2, ...$
\begin{equation}\label{key2}
\|\varphi\|\leq kA+ 8^{k-1}a^{k}\|T^{k}\varphi\|,
\end{equation}
as long as $\varphi$ belongs to the domain of $T^{k}$.
\end{lem}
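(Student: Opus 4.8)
The plan is to prove the estimate by induction on $l$, where $k=2^{l}$. The case $l=0$, i.e. $k=1$, is precisely the hypothesis $\|\varphi\|\le A+a\|T\varphi\|$, since $8^{0}=1$ and $T^{1}=T$. So it suffices to carry out the doubling step: assuming the inequality
$$
\|\varphi\|\le kA+8^{k-1}a^{k}\|T^{k}\varphi\|
$$
holds for some $k=2^{l}$ and all $\varphi$ in $\mathcal{D}(T^{k})$, deduce the corresponding inequality with $k$ replaced by $2k$ for all $\varphi$ in $\mathcal{D}(T^{2k})$.

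The key tool for the doubling step is the interpolation inequality available for a self-adjoint $T$: for any $\varphi\in\mathcal{D}(T^{2k})$,
$$
\|T^{k}\varphi\|^{2}=\left<T^{k}\varphi,T^{k}\varphi\right>=\left<T^{2k}\varphi,\varphi\right>\le\|T^{2k}\varphi\|\,\|\varphi\|,
$$
so that $\|T^{k}\varphi\|\le\|\varphi\|^{1/2}\|T^{2k}\varphi\|^{1/2}$; here one uses that $\mathcal{D}(T^{2k})\subset\mathcal{D}(T^{k})$, which follows from the spectral theorem. Substituting this bound into the inductive hypothesis gives
$$
\|\varphi\|\le kA+8^{k-1}a^{k}\,\|\varphi\|^{1/2}\|T^{2k}\varphi\|^{1/2}.
$$
Now I would apply Young's inequality $xy\le\tfrac12 x^{2}+\tfrac12 y^{2}$ with $x=\|\varphi\|^{1/2}$ and $y=8^{k-1}a^{k}\|T^{2k}\varphi\|^{1/2}$, which yields
$$
\|\varphi\|\le kA+\tfrac12\|\varphi\|+\tfrac12\,8^{2(k-1)}a^{2k}\|T^{2k}\varphi\|.
$$
Absorbing $\tfrac12\|\varphi\|$ into the left-hand side and multiplying by $2$ gives $\|\varphi\|\le 2kA+8^{2(k-1)}a^{2k}\|T^{2k}\varphi\|$, and since $2(k-1)\le 2k-1$ we have $8^{2(k-1)}\le 8^{2k-1}$, hence
$$
\|\varphi\|\le 2kA+8^{2k-1}a^{2k}\|T^{2k}\varphi\|,
$$
which is exactly the claimed inequality for $2k=2^{l+1}$. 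This closes the induction.

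There is no serious obstacle here: the only points requiring a little care are the verification that the spectral calculus justifies both the interpolation inequality and the nesting of domains $\mathcal{D}(T^{2^{l+1}})\subset\mathcal{D}(T^{2^{l}})$, and the bookkeeping of the constant, where the slack in $8^{2(k-1)}\le 8^{2k-1}$ is precisely what lets the clean power $8^{k-1}$ propagate through the doubling. It is worth noting that the restriction to $k$ a power of $2$ is essential to this argument, because the interpolation step $\|T^{k}\varphi\|\le\|\varphi\|^{1/2}\|T^{2k}\varphi\|^{1/2}$ only links $k$ with $2k$; for general $k$ one would need a more elaborate chain of such inequalities and a correspondingly worse constant.
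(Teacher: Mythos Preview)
Your argument is correct. The induction on $l$ via the interpolation inequality $\|T^{k}\varphi\|^{2}=\langle T^{2k}\varphi,\varphi\rangle\le\|T^{2k}\varphi\|\,\|\varphi\|$ followed by Young's inequality and absorption is a clean way to carry out the doubling, and the bookkeeping $8^{2(k-1)}\le 8^{2k-1}$ is exactly what makes the constant propagate. One cosmetic point: in stating the inductive hypothesis you write ``for all $\varphi$ in $\mathcal{D}(T^{k})$'', but the lemma is about a \emph{fixed} $\varphi$ satisfying the base inequality; your inductive step in fact only uses the hypothesis for that one $\varphi$, so the argument goes through unchanged once the phrasing is corrected.

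As for comparison with the paper: the paper does not actually supply a proof of this lemma but only cites \cite{Pes00}. However, the paper \emph{does} prove the companion Lemma~\ref{lemma} (the case $A=0$), and there the method is quite different from yours: it passes to the direct-integral spectral representation $X=\int_{0}^{\infty}X(\tau)\,dm(\tau)$, splits the spectral axis at $\tau=a^{-1}$, and shows directly that the inequality $\|\varphi\|\le a\|T\varphi\|$ self-improves to $\|\varphi\|\le a^{2}\|T^{2}\varphi\|$ by comparing the integrals over the two pieces. Your route avoids the direct-integral machinery entirely, using only self-adjointness through the single identity $\langle T^{k}\varphi,T^{k}\varphi\rangle=\langle T^{2k}\varphi,\varphi\rangle$; the price is the extra additive term coming from Young's inequality, which is precisely why the constant $8^{k-1}$ (rather than $1$) appears here but not in Lemma~\ref{lemma}.
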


\begin{thm}\label{graph-norm} For every $\rho$-lattice $X_{\rho}$ and every $m=2^{l}, l=0,1,2, ...$ there exist constants $c_{1}=c_{1}({\bf X}, \rho, m),  C_{1}=C_{1}({\bf X})$ such that 
\begin{equation}\label{equivalence}
c_{1}\left(\|f\|^{2}+\|\mathcal{L}^{m/2} f\|^{2}\right)^{1/2}\leq \left( \sum_{j}| \left<f, \xi_{j}\right>|^{2}+\|\mathcal{L}^{m/2} f\|^{2}\right)^{1/2}\leq
$$
$$
 C_{1}\left(\|f\|^{2}+\|\mathcal{L}^{m/2} f\|^{2}\right)^{1/2}.
\end{equation}

\end{thm}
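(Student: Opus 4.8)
The plan is to establish the two inequalities separately: the right-hand one is an easy consequence of the finite multiplicity of a $\rho$-lattice cover, while the left-hand one follows from the global Poincar\'e inequality (\ref{GPin}) together with the self-improvement Lemma \ref{key}. Throughout one keeps $\alpha>0$ fixed (say $\alpha=1$).

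For the upper estimate I would start from the explicit expression $\left<f,\xi_{j}\right>=|B(x_{j},\rho)|^{-1/2}\int_{B(x_{j},\rho)}f\,d\mu$ and apply the Cauchy--Schwarz inequality over the ball $B(x_{j},\rho)$ to obtain $|\left<f,\xi_{j}\right>|^{2}\leq\int_{B(x_{j},\rho)}|f|^{2}\,d\mu$. Summing over $j$ and using property (2) of a metric $\rho$-lattice (each point of $\mathbf{X}$ lies in at most $N(\mathbf{X})$ of the balls $B(x_{j},\rho)$) gives $\sum_{j}|\left<f,\xi_{j}\right>|^{2}\leq N(\mathbf{X})\|f\|^{2}$. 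Adding $\|\mathcal{L}^{m/2}f\|^{2}$ to both sides yields the right-hand inequality with $C_{1}=\sqrt{\max(1,N(\mathbf{X}))}$, a constant depending only on $\mathbf{X}$.

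For the lower estimate I would first rewrite (\ref{GPin}) in the additive form $\|f\|\leq A+a\|\mathcal{L}^{1/2}f\|$ by taking square roots and using $\sqrt{u+v}\leq\sqrt{u}+\sqrt{v}$, with $A=\sqrt{(1+\alpha)/\alpha}\,\bigl(\sum_{j}|\left<f,\xi_{j}\right>|^{2}\bigr)^{1/2}$ and $a=\rho\sqrt{(1+\alpha)CN(\mathbf{X})}$. Then I apply Lemma \ref{key} with $T=\mathcal{L}^{1/2}$ and $k=m=2^{l}$, which is legitimate because $f\in\mathcal{D}(\mathcal{L}^{m/2})$ is implicit in the statement; this produces $\|f\|\leq mA+8^{m-1}a^{m}\|\mathcal{L}^{m/2}f\|$. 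Squaring (via $(u+v)^{2}\leq2u^{2}+2v^{2}$) and then adding $\|\mathcal{L}^{m/2}f\|^{2}$ to both sides gives $\|f\|^{2}+\|\mathcal{L}^{m/2}f\|^{2}\leq C'\bigl(\sum_{j}|\left<f,\xi_{j}\right>|^{2}+\|\mathcal{L}^{m/2}f\|^{2}\bigr)$, so that $c_{1}=(C')^{-1/2}$ depends on $\mathbf{X}$, $\rho$ and $m$, exactly as asserted.

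The only genuinely delicate point is the passage from the exponent $1/2$ occurring in (\ref{GPin}) to the exponent $m/2$ in the statement; everything rests on correctly matching the hypothesis of Lemma \ref{key} and on tracking how the constants $A$ and $a$ propagate through the $k$-fold iteration (the factor $8^{m-1}a^{m}\rho^{m}$). Once the bookkeeping of constants is carried out, the remaining steps are routine applications of elementary inequalities.
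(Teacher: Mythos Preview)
Your proposal is correct and follows essentially the same route as the paper: the right-hand inequality via Cauchy--Schwarz on each ball together with the uniform multiplicity bound $N(\mathbf{X})$, and the left-hand inequality by taking $\alpha=1$ in the global Poincar\'e inequality (\ref{GPin}) and invoking Lemma~\ref{key} with $T=\mathcal{L}^{1/2}$. The only slip is the stray extra factor $\rho^{m}$ in your final parenthetical remark (your $a$ already contains $\rho$), but this does not affect the argument.
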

\begin{proof} 
Consider (\ref{GPin}) with $\alpha=1$ and apply Lemma  \ref{key}. It will show existence of a $\widetilde{C}=\widetilde{C}({\bf X}, \rho, m)$ such that 
$$
\|f\|^{2}\leq \widetilde{C}\left(\sum_{j}| \left<f, \xi_{j}\right>|^{2}+\|\mathcal{L}^{m/2} f\|^{2}\right),
$$
which implies left side of  (\ref{equivalence}).
By using the H\"older  inequality we obtain 
\begin{equation}\label{norm-equivalence}
\sum_{j}|\left<f, \xi_{j}\right>|^{2}= \sum_{j}\left||B(x_{j}, \rho)|^{-1/2}\int_{B(x_{j}, \rho)}f\right|^{2}\leq
$$
$$
  \sum_{j}\int_{B(x_{j}, \rho)}|f|^{2}\leq N({\bf X})\|f\|^{2},
\end{equation}
where $N({\bf X})$ is from Lemma \ref{covering}. This inequality implies the right-hand side of (\ref{equivalence}). Theorem is proved.

\end{proof}

\section{ Frames of averages in Paley-Wiener spaces}
\subsection{Paley-Wiener functions  in $L_{2}(\mathbf{X})$.}

Since $\mathcal{L}$ is a self-adjoint non negative  definite operator
 in the Hilbert space $L_{2}(\mathbf{X})$ it has a uniquely defined non negative self- adjoint square root $\sqrt{\mathcal{L}}$. By using Spectral Theorem for $\sqrt{\mathcal{L}}$ and associated operational calculus one can define the projector ${\bf 1}_{[0,\>\omega]}(\sqrt{\mathcal{L}})$, where ${\bf 1}_{[0,\>\omega]}$ is the characteristic function of the interval $[0,\omega]$.

\begin{defn}
We will  say that a function $f \in L_{2}(\mathbf{X})$ belongs to the {\it Paley-Wiener space} $PW_{\omega}(\mathcal{L})$
if it belongs to the range of the projector ${\bf 1}_{[0,\>\omega]}(\sqrt{\mathcal{L}})$.

\end{defn}
Next we denote by $\mathcal{H}^{k}$  the domain  of $\mathcal{L}^{k/2}$.
It is a Banach space,
equipped with the graph norm $\|f\|_{k}=\|f\|+\|\mathcal{L}^{k/2}f\|$.
The next theorem contains generalizations of several results
from  classical harmonic analysis (in particular  the
Paley-Wiener theorem). It follows from our  results in
\cite{Pes00} and \cite{Pes09a}.

\begin{thm}The spaces $PW_{\omega}(\mathcal{L})$ have the following properties:
\begin{enumerate}
\item  the space $PW_{\omega}(\mathcal{L})$ is a linear closed subspace in
$L_{2}(\mathbf{X})$.
\item the space 
 $\bigcup _{ \omega >0}PW_{\omega}(\mathcal{L})$
 is dense in $L_{2}(\mathbf{X})$;

\item (Bernstein inequality)   $f \in PW_{\omega}(\mathcal{L})$ if and only if
$ f \in \mathcal{H}^{\infty}=\bigcap_{k=1}^{\infty}\mathcal{H}^{k}$,
and the following Bernstein inequalities  hold true
\begin{equation}\label{Bern100}
\|\mathcal{L}^{s/2}f\|\leq \omega^{s}\|f\| \quad \mbox{for all} \, \,  s\in \mathbb{R}_{+};
\end{equation}

\end{enumerate}
\end{thm}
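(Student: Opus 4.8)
The plan is to derive all three properties from the functional calculus for the non‑negative self‑adjoint operator $\sqrt{\mathcal{L}}$. Write its spectral resolution as $\sqrt{\mathcal{L}}=\int_{[0,\infty)}\lambda\,dE_\lambda$, put $P_\omega={\bf 1}_{[0,\omega]}(\sqrt{\mathcal{L}})$, and for $f\in L_2(\mathbf{X})$ let $\mu_f(A)=\langle{\bf 1}_A(\sqrt{\mathcal{L}})f,f\rangle$ be the associated finite positive spectral measure, so that $\mu_f([0,\infty))=\|f\|^2$ (the spectrum of $\sqrt{\mathcal{L}}$ lies in $[0,\infty)$) and $\|\mathcal{L}^{s/2}f\|^2=\int_{[0,\infty)}\lambda^{2s}\,d\mu_f(\lambda)$ whenever the integral is finite.

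First I would dispose of (1) and (2). For (1): $P_\omega$ is a bounded self‑adjoint idempotent, hence an orthogonal projection, so $PW_\omega(\mathcal{L})=\mathrm{ran}\,P_\omega$ is a closed subspace. For (2): $\|f-P_\omega f\|^2=\langle(I-P_\omega)f,f\rangle=\mu_f((\omega,\infty))\to 0$ as $\omega\to\infty$, by continuity from above of the finite measure $\mu_f$ since $\bigcap_{\omega>0}(\omega,\infty)=\emptyset$; as $P_\omega f\in PW_\omega(\mathcal{L})$, the union is dense.

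The substance is (3). For the forward implication, if $f=P_\omega f$ then $\mu_f([0,\omega])=\|P_\omega f\|^2=\|f\|^2=\mu_f([0,\infty))$, so $\mu_f$ is concentrated on $[0,\omega]$, and then $\|\mathcal{L}^{s/2}f\|^2=\int_{[0,\omega]}\lambda^{2s}\,d\mu_f\le\omega^{2s}\mu_f([0,\omega])=\omega^{2s}\|f\|^2$; this proves the Bernstein inequalities and, being finite for every $s$, shows $f\in\mathcal{H}^k$ for all $k$, i.e. $f\in\mathcal{H}^\infty$. For the converse, assume $f\in\mathcal{H}^\infty$ and the Bernstein inequalities. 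Note first that $f\in\mathcal{H}^\infty$ already makes $\mathcal{L}^{s/2}f$ defined for every real $s\ge 0$: choosing an integer $k\ge s$ gives $\lambda^{2s}\le 1+\lambda^{2k}$ for all $\lambda\ge 0$, whence $\int\lambda^{2s}\,d\mu_f\le\|f\|^2+\|\mathcal{L}^{k/2}f\|^2<\infty$, so the inequalities make sense as stated. Then I would argue by contradiction: if $\mu_f$ were not concentrated on $[0,\omega]$, there would be $\varepsilon>0$ with $\mu_f((\omega+\varepsilon,\infty))=:\delta>0$, and for every $s>0$
$$
\omega^{2s}\|f\|^2\ \ge\ \|\mathcal{L}^{s/2}f\|^2\ \ge\ \int_{(\omega+\varepsilon,\infty)}\lambda^{2s}\,d\mu_f(\lambda)\ \ge\ (\omega+\varepsilon)^{2s}\delta,
$$
i.e. $\bigl((\omega+\varepsilon)/\omega\bigr)^{2s}\le\|f\|^2/\delta$ uniformly in $s$, which is absurd as $s\to\infty$. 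Hence $\mu_f$ is concentrated on $[0,\omega]$, i.e. $P_\omega f=f$ and $f\in PW_\omega(\mathcal{L})$.

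I do not expect a genuine obstacle; the whole argument is the functional calculus for $\sqrt{\mathcal{L}}$. If there is any subtlety, it is the verification that membership in $\mathcal{H}^\infty$ makes the real‑parameter Bernstein inequalities meaningful, and the limiting step in the converse, where it matters that $\omega>0$ and that one can pick a strictly larger threshold $\omega+\varepsilon$ still carrying positive spectral mass. Alternatively, as the authors indicate, (3) is a special case of the Paley--Wiener-type results in \cite{Pes00} and \cite{Pes09a}.
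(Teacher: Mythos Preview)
Your proof is correct. The paper does not supply its own argument here: it simply cites \cite{Pes00} and \cite{Pes09a} and moves on. Your self-contained spectral-calculus proof---projections for (1), monotone exhaustion of the spectral measure for (2), and the support/growth dichotomy for (3)---is the standard route and almost certainly coincides with what is in those references; the minor care you take (that $f\in\mathcal{H}^\infty$ makes $\mathcal{L}^{s/2}f$ defined for all real $s\ge 0$, and that the contradiction in the converse uses $\omega>0$) is exactly what is needed, so there is nothing to add.
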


\subsection{Sampling and Hilbert frames}\label{sampling/frames}

 A set of vectors $\{\theta_{v}\}$  in a Hilbert space $H$ is called a Hilbert  frame if there exist constants $a,b >0$ such that for all $f\in H$ 
\begin{equation}\label{Frame ineq}
a\|f\|^{2}_{2}\leq \sum_{v}\left|\left<f,\theta_{v}\right>\right|^{2}     \leq b\|f\|_{2}^{2}.
\end{equation}
The largest $a$ and smallest $b$ are called respectively the lower and the upper frame bounds and the ratio $b/a$ is known as the tightness of the frame.  If $a=b$ then $\{\theta_{v}\}$ is a \textit{tight } frame, and if $a=b=1$ it is called a \textit{Parseval } frame. 
Parseval frames are similar in many respects to orthonormal bases.  For example, if all members of a Parseval frame are unit vectors then it  is an  orthonormal basis. 

According to the general theory of Hilbert frames \cite{DS}, \cite{Gr} the frame  inequality (\ref{Frame ineq})  implies that there exists a dual frame $\{\Theta_{v}\}$ (which is not unique in general) 
for which the following reconstruction formula holds 
\begin{equation}\label{frame/reconstruction}
f=\sum_{v}\left<f,\theta_{v}\right>\Theta_{v}.
\end{equation}

In general  it is not easy to find a dual frame. For this reason one can resort 
to the following  frame algorithm (see  \cite{Gr}, Ch. 5) which performs reconstruction by iterations.       
Given a relaxation parameter $0<\mu<\frac{2}{b}$, set $\eta=\max\{|1-\mu a|,\> |1-\mu b|\}<1$. Let $f_{0}=0$ and define recursively
\begin{equation}
\label{rec}
f_{n}=f_{n-1}+\mu \Phi(f-f_{n-1}),
\end{equation}
where $\Phi$ is the frame operator which is defined on $H$ by the formula
$
\Phi f=\sum_{\nu}\left<f,\xi_{\nu}\right>\xi_{\nu}.
$
In particular, $f_{1}=\mu \Phi f=\mu\sum_{j}\left<f, \xi_{\nu}\right> \xi_{\nu}$. Then  $\lim_{n\rightarrow \infty}f_{n}=f$ with a geometric rate of convergence, that is, 
\begin{equation}
\label{conv}
\|f-f_{n}\|\leq \eta^{n}\|f\|.
\end{equation}
In particular,  for the choice $\mu=\frac{2}{a+b}$ the convergence factor is 
 $$ 
\eta=\frac{b-a}{a+b}.
$$
\subsection{Frames of averages in Paley-Wiener spaces}

\begin{thm}
For a given $\alpha>0$ there exists a constant $0<\nu=\nu({\bf X}, \alpha)<1$  such that for
any $\omega>0$, every metric $\rho$-lattice $\mathbf{X}_{\rho}=\{x_{j}\}$ with
$\rho= \nu\omega^{-1/2}$ the corresponding set of   functionals 
\begin{equation}\label{avfunc}
f\rightarrow \left<f, \xi_{j}\right>=|B(x_{j}, \rho)|^{-1/2}\int_{B(x_{j}, \rho)} f
\end{equation}
is a frame in $PW_{\omega}(\mathcal{L})$, i. e.  
\begin{equation}
\frac{\alpha}{2+2\alpha}\|f\|^{2}\leq \sum_{j}|\left<f, \xi_{j}\right>|^{2}\leq N(\mathbf{X})\|f\|^{2},\>\>\>\alpha>0.
\end{equation}

\end{thm}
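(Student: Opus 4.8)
The plan is to read off both frame bounds from the global Poincar\'e inequality (\ref{GPin}) of Theorem \ref{GPI}, the only genuinely new ingredient being a choice of the proportionality constant $\nu$ relating $\rho$ to $\omega^{-1/2}$ which forces the first (``derivative'') term in (\ref{GPin}) to be absorbed uniformly in $\omega$.

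\emph{Lower bound.} Fix the given $\alpha>0$, let $f\in PW_{\omega}(\mathcal{L})$ be arbitrary, and apply (\ref{GPin}) to $f$ and to the metric $\rho$-lattice $\mathbf{X}_{\rho}$ with $\rho=\nu\omega^{-1/2}$:
\[
\|f\|^{2}\leq (1+\alpha)\,CN(\mathbf{X})\,\rho^{2}\,\|\mathcal{L}^{1/2}f\|^{2}+\frac{1+\alpha}{\alpha}\sum_{j}\bigl|\langle f,\xi_{j}\rangle\bigr|^{2}.
\]
Since $f$ is band-limited, the Bernstein inequality (\ref{Bern100}) gives $\|\mathcal{L}^{1/2}f\|^{2}\leq\omega\|f\|^{2}$, so with $\rho^{2}=\nu^{2}\omega^{-1}$ the first term on the right is at most $(1+\alpha)CN(\mathbf{X})\nu^{2}\|f\|^{2}$; observe that $\omega$ has cancelled. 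Now I would choose $\nu=\nu(\mathbf{X},\alpha)$ so small that $(1+\alpha)CN(\mathbf{X})\nu^{2}\leq\tfrac12$ and also $\nu<1$, which is possible because $C$ (from (\ref{Poinc-0})) and $N(\mathbf{X})$ (from Lemma \ref{covering}) depend on $\mathbf{X}$ alone. Transposing this term to the left yields $\tfrac12\|f\|^{2}\leq\frac{1+\alpha}{\alpha}\sum_{j}|\langle f,\xi_{j}\rangle|^{2}$, i.e. the claimed lower bound $\frac{\alpha}{2+2\alpha}\|f\|^{2}\leq\sum_{j}|\langle f,\xi_{j}\rangle|^{2}$.

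\emph{Upper bound.} This half is independent of the band-limitation and of the value of $\nu$: arguing exactly as in (\ref{norm-equivalence}), for each $j$ the Cauchy--Schwarz inequality gives $|\langle f,\xi_{j}\rangle|^{2}=|B(x_{j},\rho)|^{-1}\bigl|\int_{B(x_{j},\rho)}f\bigr|^{2}\leq\int_{B(x_{j},\rho)}|f|^{2}$, and summing over $j$ while using that the cover $\{B(x_{j},\rho)\}$ has multiplicity at most $N(\mathbf{X})$ (Lemma \ref{covering}) gives $\sum_{j}|\langle f,\xi_{j}\rangle|^{2}\leq N(\mathbf{X})\|f\|^{2}$.

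The calculation is short, so the real content — and the main point to get right — is the uniformity in $\omega$: one must see that, after substituting $\rho^{2}=\nu^{2}\omega^{-1}$ and invoking Bernstein, the surviving coefficient $(1+\alpha)CN(\mathbf{X})\nu^{2}$ no longer depends on $\omega$, so a single $\nu=\nu(\mathbf{X},\alpha)$ works for every $\omega>0$. This is precisely the reason the radii must scale like $\omega^{-1/2}$, and the only place band-limitation enters. A secondary, bookkeeping issue is that $\rho=\nu\omega^{-1/2}$ must be small enough for Lemma \ref{covering} and the local Poincar\'e inequality (\ref{Poinc-0}) to apply; this holds automatically once $\omega$ is bounded below — and shrinking $\nu$ only helps — while in environments where (\ref{Poinc-0}) is valid at every scale there is nothing further to verify.
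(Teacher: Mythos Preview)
Your argument is correct and matches the paper's proof essentially line for line: apply the global Poincar\'e inequality (\ref{GPin}), invoke Bernstein to replace $\|\mathcal{L}^{1/2}f\|^{2}$ by $\omega\|f\|^{2}$, choose $\nu$ so that $(1+\alpha)CN(\mathbf{X})\nu^{2}=\tfrac12$ (the paper fixes this value explicitly as $\nu=\bigl(2(1+\alpha)CN(\mathbf{X})\bigr)^{-1/2}$), absorb the first term, and quote (\ref{norm-equivalence}) for the upper bound. Your additional remarks on uniformity in $\omega$ and on the smallness requirement for $\rho$ are sound and do not depart from the paper's reasoning.
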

\begin{proof}

Using  Bernstein inequality (\ref{Bern100}) and global Poincar\'e inequality (\ref{GPin}) one obtains  for $f\in PW_{\omega}(\mathcal{L})$
\begin{equation}\label{upper-ineq}
\|f\|^{2}\leq (1+\alpha) CN(\mathbf{X})(\rho\omega^{1/2})^{2}\|f\|^{2}+  \frac{1+\alpha}{\alpha}  \sum_{j}| \left<f, \xi_{j}\right>|^{2}.
\end{equation}
If for a fixed $\alpha>0$ one picks 
$$
\nu=\frac{1} {\sqrt{2(1+\alpha)CN(\mathbf{X})}  }
$$ 
then  by choosing $\rho$ which satisfies 
$
\rho=\nu\omega^{-1/2}
$
we can move the first term on the right in (\ref{upper-ineq}) to the left side to obtain 
\begin{equation}\label{last-ineq}
\|f\|^{2}\leq \left(\frac{2}{\alpha}+2\right)\sum_{j}| \left<f, \xi_{j}\right>|^{2},\>\>\>\alpha>0.
\end{equation}
Opposite inequality follows from (\ref{norm-equivalence}).
Theorem is proven.

\end{proof}

	\begin{col}
	
For a given $\alpha>0$ there exists a constant $0<\nu=\nu({\bf X}, \alpha)<1$  such that for
any $\omega>0$ and  every metric $\rho$-lattice $\mathbf{X}_{\rho}=\{x_{j}\}$ with
$\rho= \nu\omega^{-1/2}$ every function $f$  in $PW_{\omega}({\bf X})$ is completely determined by its averages (\ref{avfunc}) and can be reconstructed from them in a stable way by using formula (\ref{frame/reconstruction}). One can also use frame algorithm described by (\ref{rec}).
	\end{col}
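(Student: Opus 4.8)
The plan is to read this corollary as the direct packaging of the preceding theorem into the abstract frame formalism recalled in Subsection~\ref{sampling/frames}; essentially nothing new has to be proved, only assembled. Fix $\alpha>0$ and let $\nu=\nu(\mathbf{X},\alpha)$ be the very constant produced in the preceding theorem, so that for every $\omega>0$ and every metric $\rho$-lattice $\mathbf{X}_{\rho}=\{x_{j}\}$ with $\rho=\nu\omega^{-1/2}$ the system $\{\xi_{j}\}$ of averaging functionals satisfies the frame inequality inside $PW_{\omega}(\mathcal{L})$ with lower bound $a=\frac{\alpha}{2+2\alpha}$ and upper bound $b=N(\mathbf{X})$. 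Since $PW_{\omega}(\mathcal{L})$ is, by the properties of Paley--Wiener spaces listed earlier, a closed linear subspace of $L_{2}(\mathbf{X})$ and hence itself a Hilbert space, the general theory of \cite{DS}, \cite{Gr} applies verbatim with $H=PW_{\omega}(\mathcal{L})$.

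First I would deduce that $f$ is completely determined by its averages (\ref{avfunc}). If $f,g\in PW_{\omega}(\mathcal{L})$ have the same averages, then $h=f-g\in PW_{\omega}(\mathcal{L})$ satisfies $\left<h,\xi_{j}\right>=0$ for all $j$, and the lower frame estimate (\ref{last-ineq}) forces $\|h\|^{2}\leq(2/\alpha+2)\sum_{j}|\left<h,\xi_{j}\right>|^{2}=0$, so $f=g$. Stability of the reconstruction is exactly the two-sided frame inequality: the analysis operator $f\mapsto(\left<f,\xi_{j}\right>)_{j}$ is bounded above and below, so by (\ref{frame/reconstruction}) there exists a dual frame $\{\Theta_{j}\}\subset PW_{\omega}(\mathcal{L})$ with $f=\sum_{j}\left<f,\xi_{j}\right>\Theta_{j}$, and the passage from the data $(\left<f,\xi_{j}\right>)_{j}\in\ell_{2}$ to $f$ is continuous, with norm controlled by $1/\sqrt{a}=\sqrt{(2+2\alpha)/\alpha}$; a small perturbation of the measured averages produces only a comparably small perturbation of the reconstructed function.

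For the constructive alternative I would invoke the frame algorithm (\ref{rec})--(\ref{conv}) applied to the frame operator $\Phi f=\sum_{j}\left<f,\xi_{j}\right>\xi_{j}$ on $PW_{\omega}(\mathcal{L})$: choosing a relaxation parameter $0<\mu<2/N(\mathbf{X})$ and setting $f_{0}=0$, $f_{n}=f_{n-1}+\mu\Phi(f-f_{n-1})$, one has $f_{n}\to f$ geometrically, and for $\mu=2/(a+b)$ the convergence factor is $\eta=(b-a)/(a+b)$ with $a,b$ as above. All iterates remain in $PW_{\omega}(\mathcal{L})$ since $\Phi$ maps that subspace into itself, so the approximants are automatically band-limited.

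There is no real obstacle: the analytic content lives entirely in the preceding theorem (which rests on the global Poincar\'e inequality (\ref{GPin}) and the Bernstein inequality (\ref{Bern100})) and in the cited abstract frame theory, and the corollary merely records their conjunction. The one point deserving a remark is that the reconstruction takes place within the Hilbert space $PW_{\omega}(\mathcal{L})$ itself, which is what guarantees that the dual frame and the frame-algorithm iterates stay band-limited.
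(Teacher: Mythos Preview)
Your proposal is correct and matches the paper's approach exactly: the corollary is stated without proof in the paper, being the immediate combination of the preceding frame theorem with the abstract frame theory of Subsection~\ref{sampling/frames}, and you have spelled this out accurately. The only small wrinkle worth tightening is that the $\xi_{j}$ themselves are not in $PW_{\omega}(\mathcal{L})$, so the frame operator and dual frame should be formed with their orthogonal projections $P_{\omega}\xi_{j}\in PW_{\omega}(\mathcal{L})$ (noting $\langle f,\xi_{j}\rangle=\langle f,P_{\omega}\xi_{j}\rangle$ for $f\in PW_{\omega}(\mathcal{L})$); with that adjustment your claim that the iterates stay band-limited is justified, and the paper itself is equally informal on this point.
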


\section{Average variational splines on metric measure spaces}\label{splines}

\subsection{Construction of average variational splines}
We consider a sufficiently small $\rho>0$, a fixed $\rho$-lattice $\{x_{j}\}$ and  the corresponding cover $\left\{B(x_{j},\>\rho)\right\}$. 
Let 
$$
 \mathcal{A}_{j}(f)=\mathcal{A}_{B(x_{j}, \rho)}(f)=\left<f,\xi_{j}\right>=|B(x_{j}, \rho)|^{-1/2}\int_{B(x_{j}, \rho)}f
 $$ 
 where $\xi_{j}=|B(x_{j}, \rho)|^{-1/2}\chi_{j}(x),$
  $\>\>\chi_{j}$ being the  characteristic function of $B(x_{j}, \rho)$.
Since all lattices have a uniform multiplicity $N({\bf X})$ one has that if $f\in L_{2}({\bf X})$ then $\{\mathcal{A}_{j}(f)\}\in l_{2}$:
$$
\sum_{j}\left |\mathcal{A}_{j}(f)\right|^{2}\leq \sum_{j}\int_{B(x_{j}, \rho)}|f|^{2}\leq N(\mathbf{X})\|f\|^{2}
$$

\begin{defn}
 For a  sufficiently large $k\in \mathbb{N}$ and a sequence ${\bf s}=\{s_{j}\}_{j\in J}\in l_{2}$ the set of all functions
in  $\mathcal{D}(\mathcal{L}^{k/2})\subset L_{2}(\mathbf{X})$ such
 that $\mathcal{A}_{j}(f)=s_{_{j}}$ will be denoted by $Z_{{\bf s}}^{k/2}$. In particular, the subspace $Z_{{\bf 0}}^{k/2}$ corresponds to the trivial sequence ${\bf 0}=\{0,0,... ,\}$.
 \end{defn}
 
  We introduce 
 $
 \mathcal{P}_{k/2}: \mathcal{D}(\mathcal{L}^{k/2})\rightarrow l_{2}
 $
 defined as 
 $$
 \mathcal{P}_{k/2}f=\left \{\mathcal{A}_{j}(f)\right\}\in l_{2},\>\>\>\>f\in \mathcal{D}(\mathcal{L}^{k/2}).
 $$
  In general, every map $ \mathcal{P}_{k/2}$ depends on $k$ and on a lattice. To simply our framework we adopt the following assumption which holds true in many important cases.
 \begin{assump}\label{ass3}
 There exists a $k_{0}\in \mathbb{N}$ such that for every $k>k_{0}$ and every lattice with sufficiently small $\rho>0$ the image of $\mathcal{D}(\mathcal{L}^{k/2})\subset L_{2}(\mathbf{X})$  under
 $
 \mathcal{P}_{k/2}
 $
 is the entire $l_{2}$.
 \end{assump}
 We consider the following optimization problem:
 
 \bigskip

 \textit{ For a given lattice $\{x_{j}\}$ and a sequence ${\bf s}=\{s_{j}\}_{j\in J}\in   l_{2}$ find a function $f$ in 
 $Z_{{\bf s}}^{k/2},\>\>\>k>k_{0},$ which minimizes the functional}
 
\begin{equation}\label{functional}
u\rightarrow \|\mathcal{L}^{k/2}u\|.
\end{equation}

\bigskip

\begin{thm}\label{unique}
If the  {\bf Assumptions \ref{ass1}, \ref{ass3}} are  satisfied then the optimization problem has a unique 
solution for every sequence ${\bf s}=\{s_{j}\}_{j\in J}\in  l_{2}$  and every $k>k_{0}$.
\end{thm}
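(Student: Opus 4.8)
The plan is to recognize this as a standard Hilbert-space variational problem: minimizing $\|\mathcal{L}^{k/2}u\|$ over the affine subspace $Z_{\mathbf{s}}^{k/2}$ is, after a change of viewpoint, finding the element of minimal norm in $\mathcal{D}(\mathcal{L}^{k/2})$ (with respect to an appropriate inner product) lying in a fixed coset of the closed subspace $Z_{\mathbf{0}}^{k/2}$. I would first argue that $Z_{\mathbf{s}}^{k/2}$ is nonempty: this is exactly where Assumption~\ref{ass3} enters, since surjectivity of $\mathcal{P}_{k/2}$ for $k>k_{0}$ guarantees some $f_{0}\in \mathcal{D}(\mathcal{L}^{k/2})$ with $\mathcal{A}_{j}(f_{0})=s_{j}$ for all $j$. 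Then $Z_{\mathbf{s}}^{k/2}=f_{0}+Z_{\mathbf{0}}^{k/2}$.

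Next I would equip $\mathcal{H}^{k}=\mathcal{D}(\mathcal{L}^{k/2})$ with the graph inner product $\langle u,v\rangle_{k}=\langle u,v\rangle+\langle \mathcal{L}^{k/2}u,\mathcal{L}^{k/2}v\rangle$, under which it is a Hilbert space. The key structural fact I need is that $Z_{\mathbf{0}}^{k/2}$ is a \emph{closed} subspace of $\mathcal{H}^{k}$: this follows because each functional $u\mapsto \mathcal{A}_{j}(u)=\langle u,\xi_{j}\rangle$ is bounded on $L_{2}(\mathbf{X})$, hence bounded on $\mathcal{H}^{k}$, so $Z_{\mathbf{0}}^{k/2}=\bigcap_{j}\ker \mathcal{A}_{j}$ is an intersection of closed hyperplanes. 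Minimizing $\|\mathcal{L}^{k/2}u\|$ alone is not quite minimizing a genuine norm on $\mathcal{H}^{k}$ (the seminorm can degenerate on low-frequency functions), so the real content is to show the minimizer still exists and is unique. Existence I would get by the standard argument: take a minimizing sequence $u_{n}\in Z_{\mathbf{s}}^{k/2}$, use Theorem~\ref{graph-norm} (the norm equivalence) together with the vanishing-type control to bound $\|u_{n}\|_{k}$, extract a weakly convergent subsequence in $\mathcal{H}^{k}$, and use weak lower semicontinuity of $u\mapsto\|\mathcal{L}^{k/2}u\|$ plus weak closedness of the affine set $Z_{\mathbf{s}}^{k/2}$ to conclude the weak limit is a minimizer.

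For uniqueness I would use strict convexity: if $u_{1},u_{2}$ are both minimizers with common minimal value $m=\|\mathcal{L}^{k/2}u_{1}\|=\|\mathcal{L}^{k/2}u_{2}\|$, then $(u_{1}+u_{2})/2\in Z_{\mathbf{s}}^{k/2}$ and the parallelogram law in $L_{2}(\mathbf{X})$ applied to $\mathcal{L}^{k/2}u_{1},\mathcal{L}^{k/2}u_{2}$ forces $\|\mathcal{L}^{k/2}(u_{1}-u_{2})\|=0$. This is where Theorem~\ref{graph-norm} is essential again: $\|\mathcal{L}^{k/2}(u_{1}-u_{2})\|=0$ combined with $\langle u_{1}-u_{2},\xi_{j}\rangle=0$ for all $j$ (since $u_{1},u_{2}$ have the same averages) plugs into the left inequality of (\ref{equivalence}) to give $\|u_{1}-u_{2}\|_{k}=0$, hence $u_{1}=u_{2}$. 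I expect the main obstacle to be the existence argument — specifically justifying that the minimizing sequence is bounded in the full graph norm $\|\cdot\|_{k}$ and not merely in the degenerate seminorm $\|\mathcal{L}^{k/2}\cdot\|$; this is precisely what the global Poincaré inequality (\ref{GPin}) / Theorem~\ref{graph-norm} is designed to supply, since it converts control of $\|\mathcal{L}^{k/2}u_{n}\|$ plus control of the averages $\langle u_{n},\xi_{j}\rangle$ (which are fixed equal to $s_{j}$, hence bounded in $\ell_{2}$) into control of $\|u_{n}\|$.
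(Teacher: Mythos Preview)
Your argument is correct and relies on the same essential ingredient (Theorem~\ref{graph-norm}) as the paper, but the route is genuinely different. The paper does not run a direct-method argument with minimizing sequences and weak compactness; instead it observes that, by Theorem~\ref{graph-norm}, the inner product
\[
\langle f,g\rangle_{*}=\sum_{j}\mathcal{A}_{j}(f)\,\overline{\mathcal{A}_{j}(g)}+\langle\mathcal{L}^{k/2}f,\mathcal{L}^{k/2}g\rangle
\]
is equivalent to the graph inner product on $\mathcal{D}(\mathcal{L}^{k/2})$, so $(\mathcal{D}(\mathcal{L}^{k/2}),\langle\cdot,\cdot\rangle_{*})$ is a Hilbert space in which $Z_{\mathbf{0}}^{k/2}$ is closed. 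On the affine set $Z_{\mathbf{s}}^{k/2}$ the first sum is the constant $\sum_{j}|s_{j}|^{2}$, hence minimizing $\|\cdot\|_{*}$ over $Z_{\mathbf{s}}^{k/2}$ is literally the same problem as minimizing $\|\mathcal{L}^{k/2}u\|$. Existence and uniqueness then drop out from a single orthogonal projection: pick any $f\in Z_{\mathbf{s}}^{k/2}$ (Assumption~\ref{ass3}), and the minimizer is $f-P_{*}f$ where $P_{*}$ is the $\langle\cdot,\cdot\rangle_{*}$-orthogonal projection onto $Z_{\mathbf{0}}^{k/2}$. Your approach via weak lower semicontinuity and the parallelogram law is perfectly valid and arguably more robust (it would generalize to non-quadratic convex functionals), but the paper's trick of absorbing the fixed averages into the inner product is shorter and has the bonus of making the orthogonality characterization of the spline (the next theorem) immediate.
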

\begin{proof} 

The same problem for functional 
\begin{equation}\label{norm-1}
u\rightarrow \left(\|u\|^{2}+\|\mathcal{L}^{k/2}u\|^{2}\right)^{1/2}, \>\>k>k_{0},\>\>u\in \mathcal{D}(\mathcal{L}^{k/2}),
\end{equation}
 can be solved easily.
For  a  given sequence $\{s_{j} \}\in  l_{2}$ consider a function $f$
in  $\mathcal{D}(\mathcal{L}^{k/2})$ such that $\mathcal{A}_{j}(f)=s_{_{j}}.$ Such function exists due to the last assumption. Let $Pf$
 denote the orthogonal projection of this function $f$  in the Hilbert
space $\mathcal{D}(\mathcal{L}^{k/2})$ with the natural inner product
\begin{equation}\label{inner-1}
\left<f,g\right>+\left<\mathcal{L}^{k/2}f,\>\mathcal{L}^{k/2}g\right>
\end{equation}
on the subspace
$Z_{\mathbf{0}}^{k/2}$ with the norm $ \left(\|u\|^{2}+\|\mathcal{L}^{k/2}u\|^{2}\right)^{1/2}$.
Then the function $g=f-Pf$ will be the unique solution of the
above minimization problem for the
 functional (\ref{norm-1}). Both existence and uniqueness follow from the fact that any two functions in $Z_{\mathbf{s}}$ are differ by an $h\in Z_{\mathbf{0}}^{k/2}$.
 Indeed, 
 \begin{enumerate}
 
 \item to show uniqueness notice that for any $f_{1}$ in $ Z_{{\bf s}}$ one has $f_{1}=f+h$, where $h\in Z_{{\bf 0}}$ and since $h=Ph$
 $$
 f_{1}-Pf_{1}=f+h-P(f+h)=f-Pf;
 $$

 \item  minimality follows from 
  $$
\|g+h\|^{2}+\|\mathcal{L}^{k/2}(g+h)\|^{2}=\left( \|g\|^{2}+ \|\mathcal{L}^{k/2}g\|^{2}\right)+\left( \|h\|^{2}+ \|\mathcal{L}^{k/2}h\|^{2}\right),
 $$
where orthogonality of $g$ to $Z_{\mathbf{0}}^{k/2}$ with respect to (\ref{inner-1}) was used. 

\end{enumerate}

The problem with functional $u\rightarrow \|\mathcal{L}^{k/2}u\|$ is that it is
not a norm.  But we already proved in Theorem \ref{graph-norm} that the inner product (\ref{inner-1}) is equivalent to the inner product 
\begin{equation}\label{inner-2}
\sum_{j}|\mathcal{A }_{j}(f)\mathcal{A}_{j}(g) |^{2}+\left<\mathcal{L}^{k/2}f,\>\mathcal{L}^{k/2}g\right>.
\end{equation}
Thus  the above procedure can be applied to the Hilbert space 
with
the inner product (\ref{inner-2}) 
 and it clearly proves existence and uniqueness of the solution of our
minimization problem for the functional $u\rightarrow \|\mathcal{L}^{k/2}u\|$. Theorem is proved. 
\end{proof}

\begin{defn}For the given ${\bf s}=\{s_{j}\}$ the corresponding unique solution will be called the variational spline and denoted  as $S_{k}({\bf s})$. 
\end{defn}

We have the following characterization of the
the above optimization problem.
\begin{thm} For a sequence $s=\{s_{j}\}_{j\in J}\in   l_{2}$
a function $f\in Z_{{\bf s}}^{k/2},\>\>\>k>k_{0},$ minimizes the functional (\ref{functional}) 
if and only if $\mathcal{L}^{k/2}f$ is orthogonal to $\mathcal{L}^{k/2}Z_{0}$ in $L_{2}({\bf X})$.
\end{thm}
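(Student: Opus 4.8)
The plan is to characterize the minimizer via the standard variational (orthogonality) argument, exactly as in the classical theory of variational splines. Recall that by Theorem~\ref{unique} the optimization problem has a unique solution $S_k(\mathbf{s})$, and that any two elements of $Z_{\mathbf{s}}^{k/2}$ differ by an element of $Z_{\mathbf{0}}^{k/2}$. So for $f\in Z_{\mathbf{s}}^{k/2}$ and arbitrary $h\in Z_{\mathbf{0}}^{k/2}$, the function $f+th$ also lies in $Z_{\mathbf{s}}^{k/2}$ for every $t\in\mathbb{R}$.

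First I would prove the ``only if'' direction. Suppose $f$ minimizes the functional $u\mapsto\|\mathcal{L}^{k/2}u\|$ over $Z_{\mathbf{s}}^{k/2}$. Then for each fixed $h\in Z_{\mathbf{0}}^{k/2}$ the scalar function $t\mapsto\|\mathcal{L}^{k/2}(f+th)\|^{2}=\|\mathcal{L}^{k/2}f\|^{2}+2t\,\langle\mathcal{L}^{k/2}f,\mathcal{L}^{k/2}h\rangle+t^{2}\|\mathcal{L}^{k/2}h\|^{2}$ attains its minimum at $t=0$. Differentiating (or simply inspecting this quadratic in $t$) forces the linear coefficient to vanish: $\langle\mathcal{L}^{k/2}f,\mathcal{L}^{k/2}h\rangle=0$. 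Since $h\in Z_{\mathbf{0}}^{k/2}$ was arbitrary, $\mathcal{L}^{k/2}f\perp\mathcal{L}^{k/2}Z_{\mathbf{0}}^{k/2}$ in $L_{2}(\mathbf{X})$. Conversely, if $\mathcal{L}^{k/2}f\perp\mathcal{L}^{k/2}Z_{\mathbf{0}}^{k/2}$, then for any competitor $g\in Z_{\mathbf{s}}^{k/2}$ we write $g=f+h$ with $h\in Z_{\mathbf{0}}^{k/2}$, and the Pythagorean identity gives $\|\mathcal{L}^{k/2}g\|^{2}=\|\mathcal{L}^{k/2}f\|^{2}+\|\mathcal{L}^{k/2}h\|^{2}\geq\|\mathcal{L}^{k/2}f\|^{2}$, so $f$ is a minimizer.

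The only point requiring a modicum of care is that the manipulations above all take place inside $\mathcal{D}(\mathcal{L}^{k/2})$ with $k>k_{0}$, so that $\mathcal{L}^{k/2}f$, $\mathcal{L}^{k/2}h$, and $\mathcal{L}^{k/2}g$ are genuine $L_{2}(\mathbf{X})$ functions and the inner products make sense; this is guaranteed by the definition of $Z_{\mathbf{s}}^{k/2}\subset\mathcal{D}(\mathcal{L}^{k/2})$ and the linearity of that domain, so $f+th\in\mathcal{D}(\mathcal{L}^{k/2})$ as needed. The main (very mild) obstacle is thus purely bookkeeping: keeping track that the affine space $Z_{\mathbf{s}}^{k/2}$ is modeled on the linear subspace $Z_{\mathbf{0}}^{k/2}$, which was already established in the proof of Theorem~\ref{unique}. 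Everything else is the elementary observation that minimizing a convex quadratic over an affine subspace is equivalent to an orthogonality condition on the gradient.
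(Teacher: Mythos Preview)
Your proof is correct and follows essentially the same approach as the paper's: both directions are handled by expanding $\|\mathcal{L}^{k/2}(f+th)\|^{2}$ as a quadratic in $t$, using the vanishing of the linear term for the ``only if'' direction and the Pythagorean identity for the ``if'' direction. The paper's presentation differs only cosmetically (it writes down the explicit competitor $f-\frac{\langle\mathcal{L}^{k/2}f,\mathcal{L}^{k/2}h\rangle}{\|\mathcal{L}^{k/2}f\|^{2}}h$ rather than invoking the derivative at $t=0$), but the underlying argument is identical.
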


\begin{proof}If $f\in Z_{{\bf s}}^{k/2},\>\>\>k>k_{0},$ then any other function in $Z_{{\bf s}}^{k/2}$ has the form $f+h$ for some $h\in Z_{{\bf 0}}$.
If $\mathcal{L}^{k/2}f$ is orthogonal to $\mathcal{L}^{k/2}Z_{0}$ then 
$$
\|\mathcal{L}^{k/2}(f+ h)\|^{2} = \|\mathcal{L}^{k/2}f\|^{2}+
\|\mathcal{L}^{k/2}h\|^{2},
$$
which shows that $f$ is the minimizer. 
Conversely, let $f\in Z_{{\bf s}}^{k/2},\>\>\>k>k_{0}$ and $f$ minimizes (\ref{functional}). For
any $h\in Z_{0},\>\>\lambda\in \mathbb{R}$ one has
$$\|\mathcal{L}^{k/2}(f+\lambda h)\|^{2}=\|\mathcal{L}^{k/2}f\|^{2}
+2\lambda  \langle \mathcal{L}^{k/2}f,\mathcal{L}^{k/2}h \rangle +\lambda^{2}\|\mathcal{L}^{k/2}h\|^{2}.
$$
It shows that the vector $f$ can be a minimizer only if $ \langle \mathcal{L}^{k/2}f,\mathcal{L}^{k/2}h \rangle
=0$ since otherwise the minimizer would be given by the formula 
$$
f-\frac{\langle\mathcal{L}^{k/2}f,\mathcal{L}^{k/2}h\rangle}{\|\mathcal{L}^{k/2}f\|^{2}}h.
$$
\end{proof}

\subsection{Interpolation and approximation by average splines}
\begin{defn}
Consider a sufficiently small $\rho>0$, a fixed $\rho$-lattice $\{x_{j}\}$ and  the corresponding cover $\left\{B(x_{j},\>\rho)\right\}$. 
For $f\in E_{k},\>\>k>k_{0},$ we denote by $S_{k}(f)$  the solution of the
minimization problem such that $S_{k}(f)-f\in Z_{0}.$ We say that $S_{k}(f)$ is a variational spline which interpolates $f$ by its average values on balls  $\left\{B(x_{j},\>\rho)\right\}$. 

\end{defn}

Note, that this terminology is justified since $S_{k}(f)-f\in Z_{0}$ if and only if for every $j$
$$
\int_{B(x_{j}, \rho)}S_{k}(f)=\int_{B(x_{j}, \rho)}f.
$$
The following Lemma was proved in \cite{Pes98a}, \cite{Pes01}.
\begin{lem}\label{lemma}
If $T$ is a self-adjoint  non-negative operator in a
Hilbert space $H$ and for an  $\varphi\in H$ and a positive $a>0$
the following inequality holds true
$$
\|\varphi\|\leq a\|T\varphi\|,
$$ then for the same $\varphi \in H$, and all $ k=2^{l}, l=0,1,2,...$ the following
inequality holds
\begin{equation}\label{lemma2}
\|\varphi\|\leq a^{k}\|T^{k}\varphi\|,
\end{equation}
as long as $\varphi$ in the domain of $T^{k}$.
\end{lem}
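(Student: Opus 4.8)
The plan is to prove Lemma \ref{lemma} — the statement that $\|\varphi\|\le a\|T\varphi\|$ implies $\|\varphi\|\le a^{k}\|T^{k}\varphi\|$ for all $k=2^{l}$ — by induction on $l$, using the spectral theorem for the self-adjoint non-negative operator $T$ together with the Cauchy–Schwarz inequality at each doubling step. The base case $l=0$ is the hypothesis. For the inductive step I would show that if the inequality holds for some $k=2^{l}$, then it holds for $2k$.

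First I would record the key doubling estimate: for any $\psi$ in the domain of $T^{2k}$ one has
\begin{equation}\label{doubling-step}
\|T^{k}\psi\|^{2}=\langle T^{k}\psi,T^{k}\psi\rangle=\langle \psi,T^{2k}\psi\rangle\leq \|\psi\|\,\|T^{2k}\psi\|,
\end{equation}
where I used self-adjointness of $T^{k}$ (which follows from self-adjointness of $T$ via the spectral calculus) to move all the powers onto one side, and then Cauchy–Schwarz. Combining \eqref{doubling-step} with the inductive hypothesis $\|\varphi\|\le a^{k}\|T^{k}\varphi\|$ applied to $\varphi$ gives
\[
\|\varphi\|^{2}\leq a^{2k}\|T^{k}\varphi\|^{2}\leq a^{2k}\|\varphi\|\,\|T^{2k}\varphi\|,
\]
and dividing by $\|\varphi\|$ (the case $\varphi=0$ being trivial) yields $\|\varphi\|\le a^{2k}\|T^{2k}\varphi\|=(a)^{2k}\|T^{2k}\varphi\|$, which is exactly the claimed inequality for the exponent $2k=2^{l+1}$. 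This closes the induction.

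I should be slightly careful about domains: to apply \eqref{doubling-step} with $\psi=\varphi$ I need $\varphi\in\mathcal{D}(T^{2k})$, which is part of the hypothesis of the step, and I also need $\varphi\in\mathcal{D}(T^{k})$ so that the inductive hypothesis is available — but $\mathcal{D}(T^{2k})\subset\mathcal{D}(T^{k})$ by the spectral theorem, so this is automatic. Alternatively, one can give a one-line spectral-theoretic proof bypassing induction: writing $T=\int_{0}^{\infty}\lambda\,dE_{\lambda}$, the hypothesis says $\int(1-a^{2}\lambda^{2})\,d\langle E_{\lambda}\varphi,\varphi\rangle\le 0$, i.e. the measure $d\langle E_{\lambda}\varphi,\varphi\rangle$ is supported where $a\lambda\ge 1$ up to the obvious convexity caveat; more robustly, on the support of the spectral measure relevant to $\varphi$ one has $1\le a^{2k}\lambda^{2k}$ whenever $1\le a^{2}\lambda^{2}$, and one deduces the result by an application of Jensen's inequality to the convex function $t\mapsto t^{k}$ applied to the probability measure $d\langle E_{\lambda}\varphi,\varphi\rangle/\|\varphi\|^{2}$. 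The only mild obstacle is the bookkeeping with operator domains and the division by $\|\varphi\|$; the analytic content is the single Cauchy–Schwarz step \eqref{doubling-step}, and everything else is routine. Since the lemma is quoted from \cite{Pes98a}, \cite{Pes01}, I would keep the argument to the short inductive proof above.
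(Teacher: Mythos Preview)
Your proof is correct, and it takes a genuinely different route from the paper's argument. The paper works explicitly in the spectral representation of $T$: writing the hypothesis as $\int(1-a^{2}\tau^{2})\,|F\varphi(\tau)|^{2}\,dm(\tau)\le 0$, it splits the integral at $\tau=a^{-1}$ and uses the pointwise inequalities $a^{2}\tau^{2}\le 1$ and $a^{2}\tau^{2}\ge 1$ on the two pieces to upgrade the integrand from $(1-a^{2}\tau^{2})$ to $(a^{2}\tau^{2}-a^{4}\tau^{4})$, thereby obtaining $a^{2}\|T\varphi\|^{2}\le a^{4}\|T^{2}\varphi\|^{2}$, and then inducts. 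Your argument bypasses the spectral splitting entirely by the single Cauchy--Schwarz line $\|T^{k}\varphi\|^{2}=\langle\varphi,T^{2k}\varphi\rangle\le\|\varphi\|\,\|T^{2k}\varphi\|$, which combined with the inductive hypothesis gives the doubling in one step. Your approach is shorter and needs the spectral theorem only to justify self-adjointness of $T^{k}$ and the domain inclusion $\mathcal{D}(T^{2k})\subset\mathcal{D}(T^{k})$; the paper's approach, while heavier, makes the role of the threshold $\tau=a^{-1}$ visually explicit. One small comment: your aside about a direct spectral proof via Jensen's inequality is not quite right as stated (the hypothesis does not force the spectral measure of $\varphi$ to be supported where $a\lambda\ge 1$), so I would simply drop that parenthetical and keep the clean Cauchy--Schwarz induction.
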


\begin{proof}
By the spectral theory \cite{BS} there exist a direct integral of
Hilbert spaces
$$
X=\int_{0}^{\infty} X(\tau)dm (\tau )
$$
 and a unitary operator
$F$ from $H$ onto $X$, which transforms domain of $T^{t}, t\geq
0,$ onto $X_{t}=\{x\in X|\tau^{t}x\in X \}$ with norm
$$
\|T^{t}f\|_{H}=\left (\int_{0}^{\infty} \tau^{2t} \|Ff(\tau
)\|^{2}_{X(\tau)} d m(\tau) \right )^{1/2}
$$
and $F(T^{t} f)=\tau ^{t} (Ff)$. According to our assumption we
have for a particular  $\varphi\in H$
$$
\int _{0}^{\infty}| F\varphi(\tau)|^{2}d m(\tau)\leq a^{2} \int
_{0}^{\infty}\tau ^{2}| F\varphi(\tau)|^{2}d m(\tau)
$$
and then for the interval $\mathcal{I}=\mathcal{I}(0, a^{-1})$ we have
$$\int _{\mathcal{I}}| F\varphi(\tau)|^{2}d m(\tau)+
\int_{[0, \infty]\setminus \mathcal{I}}|F\varphi|^{2}d m(\tau)\leq
$$
$$
a^{2}\left( \int_{\mathcal{I}}\tau^{2}|F\varphi|^{2}d m(\tau) +\int_{[0,
\infty]\setminus \mathcal{I}}\tau^{2}|F\varphi| ^{2}d m(\tau)\right ) .
$$
 Since $a^{2}\tau^{2}<1$ on $\mathcal{I}(0, a^{-1})$
$$
0\leq \int_{\mathcal{I}}\left
(|F\varphi|^{2}-a^{2}\tau^{2}|F\varphi|^{2}\right)d m(\tau) \leq
\int _{[0, \infty]\setminus \mathcal{I}}\left ( a^{2}
\tau^{2}|F\varphi|^{2}-|F\varphi|^{2}\right)d m(\tau).
$$
This inequality
 implies the inequality
 $$
 0\leq \int_{\mathcal{I}}\left (a^{2}\tau^{2}|F\varphi|^{2}-
a^{4}\tau^{4}|F\varphi|^{2}\right)d m(\tau) \leq \int_{[0,
\infty]\setminus \mathcal{I}}\left ( a^{4}
\tau^{4}|F\varphi|^{2}-a^{2}\tau^{2}|F\varphi| ^{2}\right)d
m(\tau)$$
 or
  $$a^{2}\int_{0}^{\infty}\tau^{2}|F\varphi|^{2}d m(\tau) \leq
 a^{4}\int_{0}^{\infty}\tau^{4}|F\varphi|
 ^{2}d m(\tau),
 $$
 which means
 $$
 \|\varphi\|\leq a\|T\varphi\|\leq a^{2}\|T^{2}\varphi\|.
 $$
 Now, by using induction one can finish the proof of the
Lemma. The Lemma is proved.
\end{proof}
\begin{rem}
By using Lemma \ref{key} with $A=0$ one could have (\ref{key2}) with $A=0$ however, the inequality (\ref{lemma2}) is  stronger.

\end{rem}

\begin{thm}
For every $k=2^{l},\>\>l\in \mathbb{N},$ and every $f\in \mathcal{D}(\mathcal{L}^{k/2})$  the following inequality takes place
\begin{equation}\label{approx}
\|f-S_{k}(f)\|^{2}\leq 4C_{0}^{k}\rho^{2k}\|\mathcal{L}^{k/2}f\|^{2}.
\end{equation}
In particular, if $f\in PW_{\omega}(\mathcal{L})$ then
\begin{equation}\label{approx-PW}
\|f-S_{k}(f)\|^{2}\leq (C_{1}\rho\omega)^{2k}\|f\|^{2},
\end{equation}
where $C_{1}=2^{1/k}C_{0}^{1/2}<2C_{0}^{1/2}$.
\end{thm}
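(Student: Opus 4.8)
The plan is to apply the ``many zeros'' estimate (\ref{zeros-ineq}) to the interpolation error $\varphi=f-S_{k}(f)$, upgrade it from order $1$ to order $k$ by means of Lemma~\ref{lemma}, then dominate $\|\mathcal{L}^{k/2}\varphi\|$ using the extremal property of the spline; the passage to $PW_{\omega}(\mathcal{L})$ is afterwards a one-line application of the Bernstein inequality (\ref{Bern100}).

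First I would record that, by the very definition of the interpolating spline, the error $\varphi=f-S_{k}(f)$ lies in $Z_{\mathbf 0}\subset\mathcal{D}(\mathcal{L}^{k/2})$, i.e. $\langle\varphi,\xi_{j}\rangle=\mathcal{A}_{j}(\varphi)=0$ for every $j$. Consequently (\ref{zeros-ineq}) applies to $\varphi$ and gives $\|\varphi\|^{2}\le CN(\mathbf{X})\rho^{2}\|\mathcal{L}^{1/2}\varphi\|^{2}$, that is $\|\varphi\|\le a\|\mathcal{L}^{1/2}\varphi\|$ with $a^{2}=CN(\mathbf{X})\rho^{2}$. Since $f$ and $S_{k}(f)$ both belong to $\mathcal{D}(\mathcal{L}^{k/2})$, so does $\varphi$, and I would invoke Lemma~\ref{lemma} with the non-negative self-adjoint operator $T=\mathcal{L}^{1/2}$, for which $T^{k}=\mathcal{L}^{k/2}$. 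For every $k=2^{l}$ this yields
\[
\|\varphi\|^{2}\le a^{2k}\|\mathcal{L}^{k/2}\varphi\|^{2}=\bigl(CN(\mathbf{X})\bigr)^{k}\rho^{2k}\|\mathcal{L}^{k/2}\varphi\|^{2},
\]
so it remains only to control $\|\mathcal{L}^{k/2}\varphi\|$ in terms of $\|\mathcal{L}^{k/2}f\|$.

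For this, I would observe that $f$ itself belongs to $Z_{\mathbf s}^{k/2}$ with $\mathbf s=\{\mathcal{A}_{j}(f)\}$, while $S_{k}(f)$ minimizes $u\mapsto\|\mathcal{L}^{k/2}u\|$ over $Z_{\mathbf s}^{k/2}$; hence $\|\mathcal{L}^{k/2}S_{k}(f)\|\le\|\mathcal{L}^{k/2}f\|$, and the triangle inequality gives $\|\mathcal{L}^{k/2}\varphi\|\le 2\|\mathcal{L}^{k/2}f\|$. Substituting this into the displayed inequality yields (\ref{approx}) with $C_{0}=CN(\mathbf{X})$. (One could shave the factor $4$ down to $1$ by using instead the orthogonality $\mathcal{L}^{k/2}S_{k}(f)\perp\mathcal{L}^{k/2}Z_{0}$ and Pythagoras, but the cruder bound already suffices.) Finally, for $f\in PW_{\omega}(\mathcal{L})$ the Bernstein inequality (\ref{Bern100}) with $s=k$ gives $\|\mathcal{L}^{k/2}f\|\le\omega^{k}\|f\|$, so (\ref{approx}) turns into $\|f-S_{k}(f)\|^{2}\le 4C_{0}^{k}\rho^{2k}\omega^{2k}\|f\|^{2}=(C_{1}\rho\omega)^{2k}\|f\|^{2}$ with $C_{1}=(4C_{0}^{k})^{1/(2k)}=2^{1/k}C_{0}^{1/2}<2C_{0}^{1/2}$, which is (\ref{approx-PW}). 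I do not expect a genuine obstacle here; the only two things to keep track of are that (\ref{zeros-ineq}) may be invoked precisely because $\varphi\in Z_{\mathbf 0}$, and that Lemma~\ref{lemma} iterates only along the dyadic scale $k=2^{l}$ and requires $\varphi\in\mathcal{D}(\mathcal{L}^{k/2})$, which here is automatic since $f,S_{k}(f)\in\mathcal{D}(\mathcal{L}^{k/2})$.
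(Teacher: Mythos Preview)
Your proof is correct and follows essentially the same route as the paper: apply the many-zeros inequality (\ref{zeros-ineq}) to $\varphi=f-S_{k}(f)\in Z_{\mathbf 0}$, iterate to order $k$ via Lemma~\ref{lemma} with $T=\mathcal{L}^{1/2}$, and then use the minimality of $S_{k}(f)$ together with the triangle inequality to replace $\|\mathcal{L}^{k/2}\varphi\|$ by $2\|\mathcal{L}^{k/2}f\|$, finishing with Bernstein for the Paley--Wiener case. Your write-up is in fact more explicit than the paper's about where the factor $4$ comes from, and your parenthetical remark that orthogonality would remove it is a nice bonus.
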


\begin{proof}
If $k=2^{l}$ then since $f-S_{k}(f)$ belongs to $Z_{0}$  by 
(\ref{zeros-ineq})  we have
$$
\|f-S_{k}(f)\|^{2}\leq CN({\bf X})\rho^{2}\|\mathcal{L}^{1/2}(f-S_{k}(f))\|^{2}.
$$
Using Lemma \ref{lemma}  with $T=\mathcal{L}^{1/2}$ we obtain
$$
\|f-S_{k}(f)\|^{2}\leq C_{0}^{k}\rho^{2k}\|\mathcal{L}^{k/2}(f-S_{k}(f))\|^{2},
$$
with $C_{0}=CN({\bf X})$ and the minimality  of
$S_{k}(f)$ gives
$$
\|f-S_{k}(f)\|^{2}\leq 4C_{0}^{k}\rho^{2k}\|\mathcal{L}^{k/2}f\|^{2}.
$$
Combining this inequality with the Bernstein inequality (\ref{Bern100}) 
we are coming to (\ref{approx-PW}). Theorem is proven.

\end{proof}
As a summary we have the following statement.
\begin{thm}
Assume that the  {\bf Assumptions \ref{ass1},  \ref{ass3}} are  satisfied.  Consider $ PW_{\omega}(\mathcal{L}),\>\omega>0,$  and $\rho<(C_{1}\omega)^{-1},$ where $C_{1}$ is from (\ref{approx-PW}).  For every $\rho$-lattice $\{x_{j}\}$ and the corresponding set of functions $\{\xi_{j}\}$ defined in (\ref{xi}) every $f\in  PW_{\omega}(\mathcal{L}),\>\omega>0,$ is uniquely determined by the set of values $\{\langle f, \xi_{j}\rangle\}$ and can be reconstructed by the formula
\begin{equation}\label{approx-PW-100}
\lim_{k\rightarrow \infty}S_{k}(f)=f,
\end{equation}
where the rate of convergence is
\begin{equation}
\|f-S_{k}(f)\|\leq \gamma^{k}\|f\|,
\end{equation}
with $\gamma=C_{1}\rho\omega<1$.

\end{thm}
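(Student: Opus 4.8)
The plan is to read the statement off from the approximation estimate (\ref{approx-PW}) already proved for Paley-Wiener functions, the hypothesis $\rho<(C_1\omega)^{-1}$ being used only to make the contraction factor admissible. First I would fix $\omega>0$ and a $\rho$-lattice $\{x_j\}$ with $\rho<(C_1\omega)^{-1}$, so that $\gamma:=C_1\rho\omega<1$. For any $f\in PW_\omega(\mathcal{L})$ and any $k=2^l$, $l\in\mathbb{N}$, the inequality (\ref{approx-PW}) gives $\|f-S_k(f)\|\le\gamma^k\|f\|$; since $\gamma<1$ the right-hand side tends to $0$ as $l\to\infty$, which yields both the reconstruction formula (\ref{approx-PW-100}) --- understood along the dyadic subsequence $k=2^l$ --- and the asserted geometric rate.

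Unique determination is then immediate: by construction $S_k$ depends only on the data sequence $\mathbf{s}=\{s_j\}$, and for $f\in PW_\omega(\mathcal{L})$ with $s_j=\langle f,\xi_j\rangle$ the limit $\lim_{l\to\infty}S_{2^l}(\mathbf{s})$ equals $f$ by the step above; hence two Paley-Wiener functions with the same averages coincide. One can also argue directly: if $h\in PW_\omega(\mathcal{L})$ has $\langle h,\xi_j\rangle=0$ for all $j$, then $h\in Z_0$, so the uniqueness in Theorem \ref{unique} forces $S_k(h)=0$ for every $k>k_0$, and (\ref{approx-PW}) gives $\|h\|\le\gamma^k\|h\|\to 0$. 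Equivalently one may invoke the ``many zeros'' inequality (\ref{zeros-ineq}) together with the Bernstein inequality (\ref{Bern100}): $\|h\|^2\le CN(\mathbf{X})\rho^2\|\mathcal{L}^{1/2}h\|^2\le CN(\mathbf{X})\rho^2\omega^2\|h\|^2$, and since $C_1^2=2^{2/k}C_0\ge C_0=CN(\mathbf{X})$ one has $CN(\mathbf{X})\rho^2\omega^2\le\gamma^2<1$, which forces $h=0$.

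There is no genuine obstacle, since the theorem only collects consequences of what precedes; the single point requiring care is that the estimate (\ref{approx-PW}), through its use of Lemma \ref{lemma}, is established only for $k=2^l$, so the limit in (\ref{approx-PW-100}) must be read along that subsequence rather than over all integers. Everything else is the substitution $\rho\omega=\gamma/C_1$ into inequalities already in hand.
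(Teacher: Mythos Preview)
Your proposal is correct and matches the paper's approach exactly: the paper itself offers no separate proof of this theorem, introducing it only with ``As a summary we have the following statement,'' so the content is precisely the act of reading off the convergence from the estimate (\ref{approx-PW}) under the hypothesis $\gamma=C_{1}\rho\omega<1$. Your additional remarks on uniqueness (via $S_{k}(h)=0$ for $h\in Z_{0}$, or via (\ref{zeros-ineq}) combined with Bernstein) and your caveat that the limit runs along $k=2^{l}$ are both apt and, if anything, make the argument more explicit than the paper does.
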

\section{Average sampling and  average splines  on combinatorial graphs}

\subsection{Analysis on Graphs}

\label{sect:main}

Let $G$ denote an undirected weighted graph, with  a finite or countable number of vertices $V(G)$  and weight function $w: V(G) \times V(G) \to \mathbb{R}_0^+$. $w$ is symmetric, i.e., $w(u,v) = w(v,u)$, and $w(u,u)=0$ for all $u,v \in V(G)$. The edges of the graph are the pairs $(u,v)$ with $w(u,v) \not= 0$.

 Our assumption is that for every $v\in V(G)$ the following finiteness condition holds
\begin{equation}\label{cond:finiteness}
 w(v) = \sum_{u \in V(G)} w(u,v)<\infty.
\end{equation}

Let $L_{2}(G)\>\>$ denote the space of  all complex-valued functions with the inner product
$$
\left<f,g\right>=\sum_{v\in V(G)}f(v)\overline{g(v)}
$$
and  the norm
\[
 \| f \|_{2} = \left( \sum_{v \in V(G)} |f(v)|^2 \right)^{1/2}.
\]

\begin{defn}The  weighted gradient norm of a function $f$ on $V(G)$ is defined by 
\[
 \| \nabla f \| = \left( \sum_{u, v \in V(G)} \frac{1}{2} |f(u) - f(v)|^2 w(u,v) \right)^{1/2}.
\] 
 The set of all $f: G \to \mathbb{C}$ for which the weighted gradient norm is finite will be denoted as $\mathcal{D}^{2}(\nabla)$.
\end{defn}

\begin{rem}
The factor $\frac{1}{2}$ makes up for the fact that every edge (i.e., every {\em unordered} pair $(u,v)$) enters twice in the summation. Note also that loops, i.e. edges of the type $(u,u)$, in fact do not contribute.

\end{rem}

We intend to prove Poincar\'e-type estimates involving weighted gradient norm.

In  the case of a \textit{finite} graph and $L_{2}(G)$-space  the weighted Laplace operator $\mathcal{L}: L_{2}(G) \to L_{2}(G)$  is introduced  via
\begin{equation}\label{L}
 (\mathcal{L}f)(v) = \sum_{u \in V(G)} (f(v)-f(u)) w(v,u)~.
\end{equation}
This  graph Laplacian is a well-studied object; it is known to be a positive-semidefinite self-adjoint \textit{bounded} operator.

According to  Theorem 8.1 and Corollary 8.2 in \cite{H}  if for an \textit{infinite} graph    there exists a $C>0$ such that the degrees are uniformly bounded
\begin{equation}\label{C}
 w(u) = \sum_{u \in V(G)} w(u,v)\leq C
\end{equation}
then operator which is defined by (\ref{L}) on functions with compact supports has a unique  positive-semidefinite self-adjoint \textit{bounded} extension $\mathcal{L}$ which is acting according to  (\ref{L}).

What is really important for us is that in both of these cases  for  the non-negative square root $\mathcal{L}^{1/2}$ one has the equality
\begin{equation}\label{L-G}
\|\mathcal{L}^{1/2}f\|=\|\nabla f  \|
\end{equation}
for all  $f\in\mathcal{D}^{2}(\nabla)$. This fact is not difficult to show directly (see \cite{Mo}, \cite{FP}).
\begin{lem}\label{grad-laplace}
For all $f \in L_{2}(G)$ contained in the domain of $\mathcal{L}$, we have
\begin{equation}\label{GL}
  \| \mathcal{L}^{1/2} f \|^2 = \| \nabla f \|^2~.
\end{equation}
 For  $f \in PW_{\omega}(\mathcal{L})$, this implies 
\begin{equation}
\label{Bern}
\|\nabla f\|=\|\mathcal{L}^{1/2}f\|\leq \sqrt{\omega}\|f\|.
\end{equation}
\end{lem}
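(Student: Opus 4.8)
\textbf{Proof proposal for Lemma \ref{grad-laplace}.}

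The plan is to establish the identity \eqref{GL} first on a dense, well-behaved subspace and then extend it by continuity, after which \eqref{Bern} will follow immediately from the spectral calculus. First I would recall that $\mathcal{L}$ is the non-negative self-adjoint operator generated by the symmetric form
$$
\mathcal{E}(f,g)=\tfrac12\sum_{u,v\in V(G)}\bigl(f(u)-f(v)\bigr)\overline{\bigl(g(u)-g(v)\bigr)}\,w(u,v),
$$
whose domain is exactly $\mathcal{D}^2(\nabla)$; this is the standard construction referenced via \cite{H}, \cite{Mo}, \cite{FP}. For $f$ in the domain of $\mathcal{L}$ (in the finite case this is all of $L_2(G)$, and in the infinite case it is the domain of the bounded extension), one has the polarization-free version $\mathcal{E}(f,f)=\langle \mathcal{L}f,f\rangle$. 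The core computation is then to check, for $f$ with compact support (or simply for all $f\in L_2(G)$ in the finite case), that
$$
\langle \mathcal{L}f,f\rangle=\sum_{v}\Bigl(\sum_{u}(f(v)-f(u))w(v,u)\Bigr)\overline{f(v)}
=\tfrac12\sum_{u,v}|f(u)-f(v)|^2 w(u,v),
$$
where the second equality is the discrete ``summation by parts'': one writes the left-hand double sum symmetrically in $u$ and $v$ using $w(u,v)=w(v,u)$, adds the two representations, and collects terms into $|f(u)-f(v)|^2$. The finiteness condition \eqref{cond:finiteness} (resp. the uniform bound \eqref{C}) guarantees absolute convergence so that all rearrangements are legitimate.

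Having shown $\langle\mathcal{L}f,f\rangle=\|\nabla f\|^2$ on the relevant core, I would then invoke \eqref{1/2}-type reasoning: since $\mathcal{L}$ is non-negative self-adjoint, $\|\mathcal{L}^{1/2}f\|^2=\langle\mathcal{L}f,f\rangle$ for every $f$ in the domain of $\mathcal{L}$, hence $\|\mathcal{L}^{1/2}f\|^2=\|\nabla f\|^2$ there. To get the identity for \emph{all} $f\in\mathcal{D}^2(\nabla)=\mathcal{D}(\mathcal{L}^{1/2})=\mathcal{D}(\mathcal{E})$, I would use that the domain of $\mathcal{L}$ is a form-core for $\mathcal{E}$ and that both sides of \eqref{GL} are the squares of the form norm, continuous in the form topology; a standard approximation argument (approximate $f$ by compactly supported functions in the $\mathcal{E}$-norm) then closes the gap. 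In the bounded case of this paper $\mathcal{L}=\mathcal{L}^*$ is everywhere defined, so this last step is essentially automatic and \eqref{GL} holds verbatim for all $f\in L_2(G)$ that lie in $\mathcal{D}^2(\nabla)$.

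Finally, \eqref{Bern} is a one-line consequence of the Bernstein inequality \eqref{Bern100} (or directly of the spectral theorem): if $f\in PW_\omega(\mathcal{L})$ then $f$ lies in the range of ${\bf 1}_{[0,\omega]}(\sqrt{\mathcal{L}})$, so
$$
\|\mathcal{L}^{1/2}f\|^2=\int_0^\omega \tau\, d\langle E_\tau f,f\rangle\leq \omega\|f\|^2,
$$
and combining with the just-proved equality $\|\nabla f\|=\|\mathcal{L}^{1/2}f\|$ yields $\|\nabla f\|=\|\mathcal{L}^{1/2}f\|\leq\sqrt{\omega}\,\|f\|$. The only genuine obstacle is the summation-by-parts identity in the infinite-graph case: one must justify interchanging the order of summation in the double sum, which is where hypothesis \eqref{C} (uniform boundedness of the weighted degrees, giving boundedness of $\mathcal{L}$ and absolute convergence) does the essential work; everything else is bookkeeping.
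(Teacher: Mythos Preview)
Your proposal is correct and follows essentially the same route as the paper: both arguments reduce \eqref{GL} to the identity $\langle \mathcal{L}f,f\rangle=\|\nabla f\|^2$, established by symmetrizing the double sum (the paper does this by writing out $\langle f,\mathcal{L}f\rangle$ and $\langle \mathcal{L}f,f\rangle$ explicitly and averaging to force the real part, then recognizing $|f(u)|^2+|f(v)|^2-2\,\mathrm{Re}\,f(u)\overline{f(v)}=|f(u)-f(v)|^2$), and both derive \eqref{Bern} from the spectral calculus. Your additional layer of form-core and density reasoning is harmless but unnecessary here, since in the paper's setting $\mathcal{L}$ is bounded and everywhere defined, so the identity holds directly without any approximation step.
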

\begin{proof}
We obtain 
\begin{eqnarray*}
 \langle f, \mathcal{L} f \rangle & = &  \sum_{u \in V(G)} f(u) \overline{ \left( \sum_{v \in V(G)} \left( f(u) - f(v) \right) w(u,v) \right)} \\
 & = & \sum_{u \in V(G)} \left( |f(u)|^2 w(u) - \sum_{v  \in V(G)} f(u) \overline{f(v)} w(u,v) \right) ~.
\end{eqnarray*}
In the same way 
\begin{eqnarray*}
 \langle f, \mathcal{L} f \rangle & = & \langle \mathcal{L} f, f \rangle \\
 & = & \sum_{u \in V(G)} \left( |f(u)|^2 w(u) - \sum_{v  \in V(G)} \overline{f(u)} f(v) w(u,v) \right)~.
\end{eqnarray*}
Averaging these equations yields
\begin{eqnarray*}
 \langle f, \mathcal{L} f \rangle &  = &  \sum_{u \in V(G)} \left( |f(u)|^2 d(u) - {\rm Re} \sum_{v  \in V(G)} f(u) \overline{f(v)} w(u,v) \right)  \\
& = & \frac{1}{2} \sum_{ u,v \in V(G)} |f(u)|^2 w(u,v) + |f(v)|^2 w(u,v) - 2  {\rm Re}  f(u) \overline{f(v)} w(u,v) \\
& = & \sum_{u,v  \in V(G)} \frac{1}{2} |f(v)-f(u)|^2 w(u,v)  =  \| \nabla f \|^2~.
\end{eqnarray*}
Now the first equality follows by taking the non-negative square root of $\mathcal{L}$ (note that by spectral theory, $f$ is also in the domain of $\mathcal{L}^{1/2}$, and (\ref{Bern}) is an obvious consequence.
\end{proof}

\section{A Poincar\'e-type inequality for finite graphs}
 
 It is well known that for every finite connected graph has $\lambda_{0}=0$  as a simple eigenvalue of the Laplace operator $L$ and the corresponding eigenfunction is a constant on the entire graph.
 Given a connected and finite graph $G$  and a function $f\in L_{2}(G)$ we consider its average
 $$
 f_{G}=\frac{1}{|G|}\sum_{v\in V(G)}f(v).
 $$
The notation $a\mathbf{1}$ is used for a constant function $f(v)=a$ for all $v\in G$.

 \begin{thm} \label{Poinc}
 For every  connected and finite graph $G$ (which contains more than one vertex)  the following Poincar\'e-type inequality holds
 \begin{equation}\label{Poinc-G}
\sum_{u\in V(G)}\left|f(u)-f_{G}\mathbf{1}\right|^{2}\leq\frac{1}{\lambda_{1}}\|\nabla f\|^{2}=\frac{1}{\lambda_{1}}\|\mathcal{L}^{1/2}f\|^{2},\>\>\>\>f\in L_{2}(G),
 \end{equation}
 where $\lambda_{1}$ is the first non-zero eigenvalue of $\mathcal{L}$. 
 \end{thm}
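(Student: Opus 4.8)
The plan is to diagonalize the self-adjoint operator $\mathcal{L}$ on the finite-dimensional space $L_2(G)$ and exploit the spectral decomposition, using $\|\mathcal{L}^{1/2}f\|^2 = \|\nabla f\|^2$ from Lemma \ref{grad-laplace} to pass between the gradient form and the operator. Since $G$ is connected and finite, $\mathcal{L}$ has an orthonormal eigenbasis $\{\varphi_0, \varphi_1, \dots\}$ with eigenvalues $0 = \lambda_0 < \lambda_1 \le \lambda_2 \le \cdots$, where $\lambda_0 = 0$ is simple with eigenfunction $\varphi_0 = |G|^{-1/2}\mathbf{1}$ (this is exactly the classical fact quoted just before the theorem: connectedness forces the kernel of $\mathcal{L}$ to be the constants, and more-than-one-vertex ensures $\lambda_1$ is an honest positive number).

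First I would write $f = \sum_{k \ge 0} c_k \varphi_k$ with $c_k = \langle f, \varphi_k\rangle$. The component along $\varphi_0$ is $c_0 \varphi_0 = \langle f, |G|^{-1/2}\mathbf{1}\rangle \, |G|^{-1/2}\mathbf{1} = \left(|G|^{-1}\sum_{v} f(v)\right)\mathbf{1} = f_G \mathbf{1}$, so $f - f_G\mathbf{1} = \sum_{k \ge 1} c_k \varphi_k$ is precisely the projection onto the orthogonal complement of the constants. Then by Parseval,
\begin{equation}\label{parseval-step}
\sum_{u \in V(G)} |f(u) - f_G\mathbf{1}|^2 = \|f - f_G\mathbf{1}\|^2 = \sum_{k \ge 1} |c_k|^2.
\end{equation}

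Next I would compute $\|\mathcal{L}^{1/2}f\|^2 = \sum_{k \ge 0}\lambda_k |c_k|^2 = \sum_{k \ge 1}\lambda_k|c_k|^2$, the $k=0$ term dropping out since $\lambda_0 = 0$. Since $\lambda_k \ge \lambda_1$ for every $k \ge 1$, we get $\|\mathcal{L}^{1/2}f\|^2 \ge \lambda_1 \sum_{k\ge1}|c_k|^2$, which combined with \eqref{parseval-step} yields $\sum_{u}|f(u) - f_G\mathbf{1}|^2 \le \lambda_1^{-1}\|\mathcal{L}^{1/2}f\|^2$. Finally, invoking the identity $\|\mathcal{L}^{1/2}f\|^2 = \|\nabla f\|^2$ from \eqref{GL} gives the stated double equality, and since $f\in L_2(G)$ is arbitrary on a finite graph the inequality holds for all $f$.

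The argument is essentially a direct min-max / Rayleigh-quotient computation, so there is no serious obstacle; the one point deserving care is the justification that $\lambda_0 = 0$ is simple with constant eigenfunction — this is where connectedness is genuinely used, as $\|\nabla f\|^2 = 0$ forces $f$ to be constant along every edge and hence (by connectedness) globally constant — and that the hypothesis "more than one vertex" guarantees $\lambda_1$ exists and is strictly positive, so that $1/\lambda_1$ makes sense. Everything else is bookkeeping with the orthonormal expansion.
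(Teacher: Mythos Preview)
Your proof is correct and follows essentially the same approach as the paper: both arguments identify $f - f_G\mathbf{1}$ as the projection of $f$ onto the orthogonal complement of the constants and then invoke the spectral gap $\lambda_1$ to bound its norm by $\lambda_1^{-1/2}\|\mathcal{L}^{1/2}f\|$, finishing with the identity $\|\mathcal{L}^{1/2}f\| = \|\nabla f\|$ from Lemma~\ref{grad-laplace}. The only difference is presentational—you write out the orthonormal eigenbasis expansion explicitly, while the paper states the spectral-gap step in one line—but the underlying reasoning is identical.
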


\begin{proof}
Note, that  the average of the function $f-f_{G}\mathbf{1}$  is zero:
$$
\sum_{u\in V(G)} \left(f(u)-\left(\frac{1}{|G|}\sum_{v\in V(G)}f(v)\right)\mathbf{1}\right)=
\sum_{u\in V(G)}f(u)-\sum_{v\in V(G)}f(v)=0.
$$
If $\lambda_{1}$  is the first nonzero eigenvalue  of $\mathcal{L}$ then $\sqrt{\lambda_{1}}$ is the first nonzero eigenvalue of the nonnegative  square root $\mathcal{L}^{1/2}$.
 Since function $f-f_{G}{\bf 1}$ is orthogonal to constants it implies 
\begin{equation}
\|f-f_{G}{\bf 1}\|\leq \frac{1}{\sqrt{\lambda_{1}}}\|\mathcal{L}^{1/2}(f-f_{G}{\bf 1})\|=\frac{1}{\sqrt{\lambda_{1}}}\|\mathcal{L}^{1/2}f\|.
\end{equation}
But according to Lemma \ref{grad-laplace} it gives 
$$
\|f-f_{G}{\bf 1}\|\leq\frac{1}{\sqrt{\lambda_{1}}}\|\nabla f\|.
$$
Theorem is proven.
\end{proof}

\section{A Local and Global   Poincare-type inequalities for finite and infinite graphs}

Let $G$ be a finite or infinite and countable connected graph and $\Omega\subset V(G)$ is a finite and connected subset of vertices which we will treat as an {\bf induced} graph  and will  denote by the same letter $\Omega$. 
We remind that this  means that the set of vertices of such graph, which will be denoted as $V(\Omega)$, is exactly the set of vertices in $\Omega$ and the set of edges are all edges in $G$ whose both ends belong to $\Omega$.
Let $\Delta_{\Omega}$ be the Laplace operator constructed according to (\ref{L})  for  such induced graph $\Omega$. The first nonzero eigenvalue of the operator operator $\Delta_{\Omega}$ will be denoted as $\lambda_{1, \Omega}$. Let $w_{\Omega}(u,v),\>\>u,v\in V( \Omega),$ and 
$$
w_{\Omega}(v)= \sum_{u \in V(\Omega)} w_{\Omega}(u,v),\>\>v\in V(\Omega),
$$ 
be the corresponding weight functions.
We notice that for every $\Omega$ and every $u, v\in V(\Omega)$ one has $w(u,v)=w_{\Omega}(u,v)$. However, in general $w(u)\geq w_{\Omega}(u)$.

Suppose that $\Xi=\{\Omega_{j}\}$ is  a disjoint cover of $V(G)$ by connected and finite subgraphs $\Omega_{j}$. We define functions $\xi_{j}$ by the formula  
$$
\xi_{j}=\frac{1}{\sqrt{ |\Omega_{j}|}}\chi_{j},
$$ 
where  $\chi_{j}$ is the characteristic function of $\Omega_{j}$, and $|\Omega_{j}|$ is the number of vertices in $\Omega_{j}$. We will be interestead  in functionals on $L_{2}(G)$ defined by these functions
$$
f\mapsto \langle f, \xi_{j} \rangle=\frac{1}{\sqrt{ |\Omega_{j}|}}\sum_{v\in V(\Omega_{j})}f(v),\>\>\>\>f\in L_{2}(G).
$$
 We will also need functions
$$
\zeta_{j}=\frac{1}{|\Omega_{j}|}\chi_{j},
$$
and corresponding functionals
$$
f\mapsto \langle f, \zeta_{j} \rangle=\frac{1}{ |\Omega_{j}|}\sum_{v\in V(\Omega_{j})}f(v),\>\>\>\>f\in L_{2}(G).
$$

 By using these notations we formulate the next two theorems.
Our  {\bf local Poincar\'e-type inequality} is the following.
\begin{thm}\label{Local-Poinc}
Let $G$ be a finite or infinite and countable graph. Let $\Omega\subset V(G)$ be a finite {\bf induced subgraph}.  Let $\Delta_{\Omega}$  be the Laplace operator of the  induced  graph $\Omega$ whose first nonzero eigenvalue is $\lambda_{1,\Omega}$. Then for every $f\in L_{2}(G)$ the following inequality holds
\begin{equation}\label{Poinc-g}
\sum_{u\in V(\Omega)}|f(u)-f_{\Omega}\chi_{\Omega}(u)|^{2}\leq
  \frac{1}{\lambda_{1, \Omega}}\|\nabla_{\Omega} f\|^{2}_{\Omega},
 \end{equation}
 where 
 $$
 \left\| \nabla_{\Omega} f \right\|^{2}_{\Omega} =  \sum_{u, v \in V(\Omega)} \frac{1}{2} \left|f(u) - f(v)\right|^2 w(u,v).
 $$
 \end{thm}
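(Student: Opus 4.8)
The plan is to reduce Theorem~\ref{Local-Poinc} to the already-established finite-graph Poincar\'e inequality of Theorem~\ref{Poinc} applied to the induced subgraph $\Omega$. The key observation is that $\Omega$, viewed as an induced graph, is itself a connected finite graph with more than one vertex (the case $|V(\Omega)|=1$ being trivial since then the left-hand side is zero), so Theorem~\ref{Poinc} directly applies to it \emph{provided} we interpret all objects intrinsically on $\Omega$: the weights $w_\Omega(u,v)$, the Laplacian $\Delta_\Omega$, its first nonzero eigenvalue $\lambda_{1,\Omega}$, and the intrinsic gradient norm $\|\nabla_\Omega g\|_\Omega$ built from the $w_\Omega(u,v)$. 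The subtlety to be handled carefully is the distinction, flagged just before the statement, between $w_\Omega(u,v)$ and $w(u,v)$ on the one hand (these are \emph{equal} for $u,v\in V(\Omega)$) and between $w_\Omega(v)$ and $w(v)$ on the other (these may differ, with $w(v)\ge w_\Omega(v)$). Since the quantity $\|\nabla_\Omega f\|_\Omega^2=\sum_{u,v\in V(\Omega)}\tfrac12|f(u)-f(v)|^2 w(u,v)$ appearing in \eqref{Poinc-g} involves only the edge weights $w(u,v)=w_\Omega(u,v)$ with both endpoints in $\Omega$, it coincides with the intrinsic gradient norm on $\Omega$, and the vertex-degree discrepancy never enters.

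Concretely, the steps I would carry out are the following. First, restrict $f\in L_2(G)$ to $V(\Omega)$, obtaining $g=f|_{V(\Omega)}\in L_2(\Omega)$; note $f_\Omega=\tfrac{1}{|\Omega|}\sum_{v\in V(\Omega)}f(v)$ is exactly the average $g_\Omega$ of $g$ on $\Omega$, and $\chi_\Omega(u)=1$ for $u\in V(\Omega)$ so the left-hand side of \eqref{Poinc-g} is precisely $\sum_{u\in V(\Omega)}|g(u)-g_\Omega|^2=\|g-g_\Omega\mathbf 1\|_{L_2(\Omega)}^2$. Second, apply Theorem~\ref{Poinc} to the connected finite induced graph $\Omega$ and the function $g$: this yields
\[
\sum_{u\in V(\Omega)}|g(u)-g_\Omega\mathbf 1|^2 \le \frac{1}{\lambda_{1,\Omega}}\|\nabla_\Omega g\|^2,
\]
where $\|\nabla_\Omega g\|^2=\sum_{u,v\in V(\Omega)}\tfrac12|g(u)-g(v)|^2 w_\Omega(u,v)$ is the intrinsic gradient norm and $\lambda_{1,\Omega}$ is the first nonzero eigenvalue of $\Delta_\Omega$ (this is where Lemma~\ref{grad-laplace}, or rather its $\Omega$-version, identifies $\|\mathcal L^{1/2}g\|$ with $\|\nabla_\Omega g\|$ so that Theorem~\ref{Poinc} can be invoked). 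Third, use $w_\Omega(u,v)=w(u,v)$ for all $u,v\in V(\Omega)$ to rewrite $\|\nabla_\Omega g\|^2=\|\nabla_\Omega f\|_\Omega^2$, and observe $g(u)-g(v)=f(u)-f(v)$ for $u,v\in V(\Omega)$, giving exactly \eqref{Poinc-g}.

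The only genuine point requiring care — and the one I would present explicitly rather than glossing over — is verifying that Theorem~\ref{Poinc} as stated for a ``connected finite graph'' legitimately applies to the \emph{induced} subgraph $\Omega$ with its intrinsic structure, i.e.\ that the graph $\Omega$ with weights $w_\Omega$ satisfies all hypotheses (connectedness is assumed in the statement; finiteness likewise; more than one vertex is the nontrivial case) and that the Laplacian $\Delta_\Omega$ built intrinsically from $w_\Omega$ via \eqref{L} is precisely the operator whose first nonzero eigenvalue is $\lambda_{1,\Omega}$. Once this identification is in place, the inequality is a verbatim instance of Theorem~\ref{Poinc}; no new estimate is needed. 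I do not anticipate a serious obstacle beyond bookkeeping: the whole content is that ``local'' here means ``pass to the induced subgraph and apply the finite-graph result,'' and the $w(v)\ge w_\Omega(v)$ discrepancy, while real, is harmless because neither side of \eqref{Poinc-g} sees the vertex degrees directly.
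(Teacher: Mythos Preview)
Your proposal is correct and follows exactly the paper's approach: the paper's entire proof is the single sentence ``Clearly, it is a direct consequence of Theorem~\ref{Poinc},'' and your write-up is a careful unpacking of precisely that reduction (restrict $f$ to $\Omega$, apply Theorem~\ref{Poinc} intrinsically on the induced graph, and use $w_\Omega(u,v)=w(u,v)$ for $u,v\in V(\Omega)$). Your attention to the $w(v)$ versus $w_\Omega(v)$ distinction and to the implicit connectedness hypothesis is more explicit than the paper's one-line remark, but the argument is the same.
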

Clearly, it is a direct consequence of Theorem \ref{Poinc}.
The next Theorem contains what we call  {\bf a global Poincar\'e-type inequality}.

\begin{thm}\label{Main-Th}
Let $G$ be a connected finite or infinite and countable graph. Suppose that $\Xi=\{\Omega_{j}\}$ is  a disjoint cover of $V(G)$ by connected and finite subgraphs $\Omega_{j}$.  Let $\Delta_{j}$  be the Laplace operator of the  {\bf induced}  graph $\Omega_{j}$ whose first nonzero eigenvalue is $\lambda_{1,j}$. We assume that 
 that there exists a non zero lower boundary of all $\lambda_{1,j}$:
 $$
 \Lambda=\Lambda_{\Xi}=\inf_{j}\lambda_{1,j}>0.
 $$
In these  notations the following inequality holds for every $f\in L_{2}(G)$ and every $\alpha>0$

\begin{equation}\label{main-ineq}
\|f\|^{2}\leq \frac{1+\alpha}{\alpha}\frac{1}{\Lambda_{\Xi}} \|\mathcal{L}^{1/2}f\|^{2}+(1+\alpha)\sum_{j}\left|\left<f, \xi_{j}\right>\right|^{2}.
\end{equation}
\end{thm}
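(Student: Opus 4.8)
The plan is to mimic exactly the argument that produced the global Poincaré inequality (\ref{GPin}) in the Dirichlet-space setting, but now using the local Poincaré-type inequality (\ref{Poinc-g}) on the induced subgraphs $\Omega_j$ in place of Assumption \ref{ass1}, and the identity $\|\mathcal{L}^{1/2}f\|^2 = \|\nabla f\|^2$ from Lemma \ref{grad-laplace} to pass from the gradient norm to $\|\mathcal{L}^{1/2}f\|$. First I would fix $\alpha>0$ and, exactly as in the derivation preceding Theorem \ref{GPI}, apply the elementary inequality (\ref{in}) pointwise with $A=f(v)$ and $B = \langle f,\zeta_j\rangle = f_{\Omega_j}$ for $v\in V(\Omega_j)$, obtaining
\begin{equation}\label{graph-pointwise}
|f(v)|^2 \leq (1+\alpha)\,|f(v)-f_{\Omega_j}|^2 + \frac{1+\alpha}{\alpha}\,|f_{\Omega_j}|^2 .
\end{equation}
Since $\Xi=\{\Omega_j\}$ is a \emph{disjoint} cover of $V(G)$, summing (\ref{graph-pointwise}) over $v\in V(\Omega_j)$ and then over $j$ reconstitutes $\|f\|^2$ on the left with no multiplicity loss (this is the point where graphs are actually easier than the Dirichlet case, where the cover had overlap $N(\mathbf{X})$).

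Next I would estimate the two resulting sums. For the first, the local inequality (\ref{Poinc-g}) applied on each $\Omega_j$ gives
$$
\sum_{v\in V(\Omega_j)}|f(v)-f_{\Omega_j}|^2 \leq \frac{1}{\lambda_{1,j}}\,\|\nabla_{\Omega_j} f\|^2_{\Omega_j} \leq \frac{1}{\Lambda_\Xi}\,\|\nabla_{\Omega_j} f\|^2_{\Omega_j},
$$
using $\lambda_{1,j}\geq \Lambda_\Xi>0$. Summing over $j$ and noting that each edge of $G$ with both ends inside some $\Omega_j$ is counted once while edges crossing between distinct blocks are simply dropped, we get $\sum_j \|\nabla_{\Omega_j} f\|^2_{\Omega_j} \leq \|\nabla f\|^2 = \|\mathcal{L}^{1/2}f\|^2$ by Lemma \ref{grad-laplace}. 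For the second sum, observe that $|f_{\Omega_j}|^2 \cdot |\Omega_j| = \bigl| |\Omega_j|^{-1/2}\sum_{v\in V(\Omega_j)} f(v)\bigr|^2 = |\langle f,\xi_j\rangle|^2$, so that $\sum_{v\in V(\Omega_j)} |f_{\Omega_j}|^2 = |\langle f,\xi_j\rangle|^2$ exactly, and summing over $j$ yields $\sum_j |\langle f,\xi_j\rangle|^2$. Combining the three displays gives precisely (\ref{main-ineq}).

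The only genuinely delicate point is the relation $\sum_j \|\nabla_{\Omega_j} f\|^2_{\Omega_j} \leq \|\nabla f\|^2$: one must check that the subgraphs being \emph{induced} means $w_{\Omega_j}$ and the ambient $w$ agree on every pair inside $\Omega_j$ (this is noted just before Theorem \ref{Local-Poinc}, so $\|\nabla_{\Omega_j}f\|^2_{\Omega_j}$ uses the ambient weights), and that disjointness of the vertex sets forces the edge sets of the $\Omega_j$ to be pairwise disjoint subsets of $E(G)$, whence summing the block gradient energies just omits the inter-block edges and is bounded by the full $\|\nabla f\|^2$. I would also verify the convergence of all infinite sums is harmless since every term is nonnegative and the left-hand side $\|f\|^2$ is assumed finite. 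No other obstacle arises; the rest is the same bookkeeping as in Theorem \ref{GPI}.
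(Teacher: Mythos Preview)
Your proof is correct and follows essentially the same route as the paper's own argument: decompose $\|f\|^2$ over the disjoint blocks, apply (\ref{in}) pointwise, invoke the local Poincar\'e-type inequality (\ref{Poinc-g}) on each $\Omega_j$, use $\sum_j\|\nabla_{\Omega_j}f\|_{\Omega_j}^2\le\|\nabla f\|^2$ (which the paper justifies with exactly your observation that $w_{\Omega_j}=w$ on pairs inside $\Omega_j$ and the blocks are disjoint), and finish with Lemma \ref{grad-laplace}. The only discrepancy is cosmetic: in your pointwise inequality (\ref{graph-pointwise}) the coefficients $(1+\alpha)$ and $\frac{1+\alpha}{\alpha}$ are interchanged relative to the paper's (\ref{algebra}) (you copied the convention used before Theorem \ref{GPI}, whereas the paper swaps them in the graph section), so carrying your version through literally yields (\ref{main-ineq}) with those two constants exchanged; since the claim is for all $\alpha>0$, the substitution $\alpha\mapsto 1/\alpha$ restores the stated form exactly.
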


\begin{proof}
One has
\begin{equation}
\|f\|^{2}=\sum_{v\in V(G)}|f(v)|^{2}=\sum_{j}\left(\sum_{v\in V( \Omega_{j})}|f(v)|^{2}\right).
\end{equation}
For every $u\in  V(\Omega_{j})$ we  apply  (\ref{in}) to obtain the next inequality in which $f_{\Omega_{j}}=\left<f, \zeta_{j}\right>$
\begin{equation}\label{algebra}
|f(u)|^{2}\leq \frac{1+\alpha}{\alpha}\left|f(u)-f_{\Omega_{j}}\chi_{j}(u)\right|^{2}+(1+\alpha)\left|f_{\Omega_{j}}\chi_{j}(u)\right|^{2},
\end{equation}
which holds for every  positive $\alpha>0$. 
By using (\ref{Poinc-g}) we obtain
\begin{equation}\label{GlobPoinc}
\sum_{u\in V(\Omega_{j})}|f(u)|^{2}\leq \frac{1+\alpha}{\alpha}\sum_{u\in V(\Omega_{j})}|f(u)-f_{\Omega_{j}}\chi_{j}(u)|^{2}+(1+\alpha)\sum_{u\in V(\Omega_{j})}\left|f_{\Omega_{j}}\chi_{j}(u)\right|^{2}\leq
$$
$$
  \frac{1+\alpha}{\alpha}\frac{1}{\lambda_{1, j}}\|\nabla_{j} f\|^{2}_{\Omega_{j}}+(1+\alpha)|\Omega_{j}|\left|f_{\Omega_{j}}\right|^{2}.
  \end{equation}
Summation over $j$ gives the following inequality
\begin{equation}
\|f\|^{2}\leq  \frac{1+\alpha}{\alpha}\frac{1}{\Lambda_{\Xi}}\sum_{j} \|\nabla_{j} f\|^{2}_{\Omega_{j}} +(1+\alpha)\sum_{j}|\Omega_{j}|\left|\left<f,\zeta_{j}\right>\right|^{2}=
$$
$$
\frac{1+\alpha}{\alpha}\frac{1}{\Lambda_{\Xi}}\sum_{j} \left(\sum_{u, v \in V(\Omega_{j})}\frac{1}{2}|f(u) - f(v)|^2 w_{j}(u,v)\right)+(1+\alpha)\sum_{j}\left|\left<f, \xi_{j}\right>\right|^{2},\>\>\>\alpha>0.
\end{equation}

Since for all $j$ one has that $w(u,v)=w_{j}(u,v),\>\>u,v\in V(\Omega_{j}),$ and since sets $\Omega_{j}$ are disjoint, it is obvious that the first term in the last line  is not greater than 
$$
\frac{1+\alpha}{\alpha}\frac{1}{\Lambda_{\Xi}}\|\nabla f\|^{2}.
$$
 It gives
\begin{equation}\label{result}
\|f\|^{2}\leq \frac{1+\alpha}{\alpha}\frac{1}{\Lambda_{\Xi}} \|\nabla f\|^{2}+(1+\alpha)\sum_{j}\left|\left<f, \xi_{j}\right>\right|^{2},\>\>\>\alpha>0,
\end{equation}
and by applying 
Theorem \ref{L} we obtain (\ref{main-ineq}).
Theorem is proved.
\end{proof}

 \begin{rem}\label{exp}
 We are making a list of differences between Poincar\'e inequalities we used on Dirichlet space and on combinatorial graphs.
 \begin{enumerate}

 \item The Poincar\'e inequality on Dirichlet spaces (\ref{Poinc-0}) is formulated for balls, while a local Poincar\'e-type inequality for graphs (\ref{Poinc-g}) is for {\bf any finite induced subgraph}.
 
 \item There is a radius squared on the right-hand side in  (\ref{Poinc-0}) and reciprocal of the first eigenvalue of the Laplacian on a corresponding induced subgraph on the right-hand side in (\ref{Poinc-g}). Note, that (\ref{Poinc-g}) is sharp.
  It obviously shows that if for every ball $B(x,r)$ one has
 $$
 \frac{1}{\lambda_{1,B(x,r)}}\leq Cr^{2},\>\>\>C>0,
 $$
 then our (\ref{Poinc-g}) implies a "regularly" looking inequality.

 \item  In fact, one can implement in case of graphs the formulas (\ref{Gamma}) and 
 (\ref{nabla}) to see that 
 $$
 \Gamma(f,f)(v)=\frac{1}{2}\sum_{u \in V(G)} |f(v)-f(u)|^2 w(u,v),
 $$
 and 
 $$
 \sum_{v\in V(G)}  \Gamma(f,f)(v)=\sum_{v\in V(G)}\left(\frac{1}{2}\sum_{u \in V(G)} |f(v)-f(u)|^2 w(u,v)\right)=\|\nabla f\|^{2}.
 $$
 In this case the integral $\int_{B(x,r)}\Gamma(f,f)d\mu$ on the right-hand side of (\ref{Poinc-0}) corresponds to the quantity
 $$ 
  \sum_{v\in \Omega}\left(\frac{1}{2}\sum_{u \in V(G)} |f(v)-f(u)|^2 w(u,v)\right) ,    
  $$
  which is clearly not smaller than $\|\nabla_{\Omega} f\|^{2}_{\Omega}$ which appears on the right-hand side of (\ref{Poinc-g}).

 \end{enumerate}
 
 \end{rem}

\section{A sampling theorem and a reconstruction methods using frames}
\begin{thm}\label{Main-Th2}
If the assumptions of the previous Theorem hold then the set of functionals $\{\zeta_{j}\}$ is a frame in any space $PW_{\omega}(\mathcal{L})$ as long as $\Lambda_{\Xi}>\frac{1+\alpha}{\alpha}\omega.$ In other words, if 
\begin{equation}\label{gamma}
\gamma=\frac{1+\alpha}{\alpha}\frac{\omega}{\Lambda_{\Xi}}<1,\>\>\>\alpha>0,
\end{equation}
then
\begin{equation}\label{P-P}
\frac{(1-\gamma)}{(1+\alpha)}\|f\|^{2}\leq \sum_{j}\left|\left<f, \xi_{j}\right>\right|^{2}\leq\|f\|^{2}.
\end{equation}
\end{thm}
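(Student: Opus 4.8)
The plan is to read off both frame bounds directly from the global Poincar\'e-type inequality (\ref{main-ineq}) established in Theorem \ref{Main-Th}, using the Bernstein inequality (\ref{Bern}) for the lower bound and an elementary Cauchy--Schwarz estimate for the upper bound. Throughout, $\alpha>0$ is the same fixed parameter appearing in (\ref{main-ineq}) and in the definition (\ref{gamma}) of $\gamma$.

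First I would fix $f\in PW_{\omega}(\mathcal{L})$ and insert the Bernstein estimate $\|\mathcal{L}^{1/2}f\|^{2}\leq \omega\|f\|^{2}$ from (\ref{Bern}) into the right-hand side of (\ref{main-ineq}). This gives
\begin{equation}
\|f\|^{2}\leq \frac{1+\alpha}{\alpha}\frac{\omega}{\Lambda_{\Xi}}\|f\|^{2}+(1+\alpha)\sum_{j}\left|\left\langle f,\xi_{j}\right\rangle\right|^{2}
=\gamma\|f\|^{2}+(1+\alpha)\sum_{j}\left|\left\langle f,\xi_{j}\right\rangle\right|^{2},
\end{equation}
with $\gamma$ as in (\ref{gamma}). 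Under the standing hypothesis $\gamma<1$ the term $\gamma\|f\|^{2}$ may be subtracted from both sides; dividing by $1+\alpha$ then yields the lower frame bound $\dfrac{1-\gamma}{1+\alpha}\|f\|^{2}\leq \sum_{j}|\langle f,\xi_{j}\rangle|^{2}$, which is the left inequality of (\ref{P-P}).

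For the upper bound I would estimate each term individually by the Cauchy--Schwarz inequality: since $\xi_{j}=|\Omega_{j}|^{-1/2}\chi_{j}$,
\begin{equation}
\left|\left\langle f,\xi_{j}\right\rangle\right|^{2}=\frac{1}{|\Omega_{j}|}\left|\sum_{v\in V(\Omega_{j})}f(v)\right|^{2}\leq \frac{1}{|\Omega_{j}|}\cdot|\Omega_{j}|\sum_{v\in V(\Omega_{j})}|f(v)|^{2}=\sum_{v\in V(\Omega_{j})}|f(v)|^{2}.
\end{equation}
Summing over $j$ and using that $\Xi=\{\Omega_{j}\}$ is a \emph{disjoint} cover of $V(G)$ gives $\sum_{j}|\langle f,\xi_{j}\rangle|^{2}\leq\sum_{j}\sum_{v\in V(\Omega_{j})}|f(v)|^{2}=\|f\|^{2}$, the right inequality of (\ref{P-P}). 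Together these show the functionals $f\mapsto\langle f,\xi_{j}\rangle$ form a frame in $PW_{\omega}(\mathcal{L})$ with bounds $a=\tfrac{1-\gamma}{1+\alpha}$ and $b=1$.

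There is no genuine obstacle: the statement is essentially a bookkeeping corollary of Theorem \ref{Main-Th}. The only points requiring a little care are keeping the constant $1+\alpha$ tracked correctly through the rearrangement, and noting that the disjointness (rather than just the covering property) of $\Xi$ is what makes the passage $\sum_{j}\sum_{v\in V(\Omega_{j})}|f(v)|^{2}=\|f\|^{2}$ an equality. One may afterwards remark that $\alpha>0$ is free, so the tightness $b/a=\tfrac{1+\alpha}{1-\gamma}$ can be optimized over all $\alpha$ for which $\gamma<1$, i.e. over all $\alpha>\omega/(\Lambda_{\Xi}-\omega)$ when $\omega<\Lambda_{\Xi}$.
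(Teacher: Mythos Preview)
Your proof is correct and follows essentially the same approach as the paper: apply the Bernstein inequality to (\ref{main-ineq}) and rearrange to obtain the lower bound, then use Cauchy--Schwarz on each $\langle f,\xi_{j}\rangle$ together with disjointness of the cover for the upper bound. Your closing remark on optimizing the tightness over $\alpha$ is an addition not in the paper, but it is sound.
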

\begin{proof}
Indeed, if $f\in PW_{\omega}(\mathcal{L})$ then by the Bernstein inequality the (\ref{main-ineq}) can be rewritten as 
$$
\|f\|^{2}\leq \frac{1+\alpha}{\alpha} \frac{\omega}{\Lambda_{\Xi}} \| f\|^{2}+(1+\alpha)\sum_{j}\left|\left<f, \xi_{j}\right>\right|^{2}.
$$
If (\ref{gamma}) holds then one has
$$
0<(1-\gamma)\|f\|^{2}\leq (1+\alpha)\sum_{j}\left|\left<f, \xi_{j}\right>\right|^{2}.
$$
On the other hand, since
$$
\left|\sum_{v\in V( \Omega_{j})}f(v)\right|^{2}\leq
\left|\Omega_{j}\right|  \left( \sum_{v\in V( \Omega_{j})}|f(v)|^{2}\right),
$$
one has 
$$
\sum_{j}|\left<f, \xi_{j}\right>|^{2}= 
\sum_{j}\frac{1}{|\Omega_{j}|}\left|\sum_{v\in V(\Omega_{j})}f(v)\right|^{2}\leq  
\sum_{j}      \left( \sum_{v\in V( \Omega_{j})}|f(v)|^{2}\right)\leq\|f\|^{2}.
$$
Theorem is proven.
\end{proof}

Note, that for the classical Paley-Wiener spaces on the real line the inequalities similar to (\ref{P-P}) in the case when $\{\xi_{j}\}$ are delta functions were proved by Plancherel and Polya. Today they are better  known as the frame inequalities.
Now we can formulate sampling theorem based on average values.

	\begin{thm} \label{conclusion} Under the same conditions and notations as above every function $f\in PW_{\omega}(\mathcal{L})$ is uniquely determined by its averages  
	$
	\left<f, \xi_{j}\right>
	$
 and can be reconstructed from this set of values in a stable way. 

\end{thm}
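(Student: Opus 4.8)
The plan is to invoke the frame machinery from Theorem \ref{Main-Th2} together with the abstract frame-theoretic facts recalled in Section \ref{sampling/frames}. First I would fix $\alpha>0$ large enough that the hypothesis $\Lambda_{\Xi}>\frac{1+\alpha}{\alpha}\omega$ of Theorem \ref{Main-Th2} is satisfied; such an $\alpha$ exists precisely because $\Lambda_{\Xi}>\omega$ is built into ``the same conditions as above.'' With this choice Theorem \ref{Main-Th2} gives the two-sided estimate (\ref{P-P}), which says that the family $\{\xi_{j}\}$ is a Hilbert frame in $PW_{\omega}(\mathcal{L})$ with explicit lower bound $a=\frac{1-\gamma}{1+\alpha}>0$ and upper bound $b=1$, where $\gamma=\frac{1+\alpha}{\alpha}\frac{\omega}{\Lambda_{\Xi}}$.

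Next I would spell out what the frame property buys us. Since $a>0$, the analysis operator $f\mapsto\{\langle f,\xi_{j}\rangle\}$ is bounded below on $PW_{\omega}(\mathcal{L})$, hence injective: if $\langle f,\xi_{j}\rangle=0$ for all $j$ then $\|f\|^{2}\le a^{-1}\sum_{j}|\langle f,\xi_{j}\rangle|^{2}=0$, so $f=0$. This is exactly the assertion that every $f\in PW_{\omega}(\mathcal{L})$ is uniquely determined by the averages $\langle f,\xi_{j}\rangle$. For the reconstruction part I would cite the general Hilbert-frame theory recalled after (\ref{Frame ineq}): there is a (generally non-unique) dual frame $\{\Xi_{j}\}$ in $PW_{\omega}(\mathcal{L})$ for which $f=\sum_{j}\langle f,\xi_{j}\rangle\,\Xi_{j}$, the series converging in $L_{2}(G)$; equivalently one may use the frame algorithm (\ref{rec})--(\ref{conv}) with relaxation parameter $0<\mu<\frac{2}{b}=2$, which reconstructs $f$ from the data $\{\langle f,\xi_{j}\rangle\}$ with the geometric error bound $\|f-f_{n}\|\le\eta^{n}\|f\|$, $\eta=\max\{|1-\mu a|,|1-\mu b|\}<1$. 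Stability is then immediate: the lower frame bound $a$ controls $\|f\|$ by the $\ell_{2}$-norm of the sample sequence, so small perturbations of the averages produce correspondingly small perturbations of the reconstructed function.

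I do not expect a genuine obstacle here, since all the analytic work has been done in Theorem \ref{Main-Th2} and the frame lemmas are quoted from the literature; the only point requiring a line of care is confirming that the frame operator $\Phi f=\sum_{j}\langle f,\xi_{j}\rangle\xi_{j}$ maps $PW_{\omega}(\mathcal{L})$ into itself, so that the iteration (\ref{rec}) stays inside the Paley--Wiener space. This follows because $PW_{\omega}(\mathcal{L})$ is a closed subspace (it is the range of the spectral projector ${\bf 1}_{[0,\omega]}(\sqrt{\mathcal{L}})$) and $\Phi$, being a positive operator bounded above and below on that subspace, commutes with the orthogonal projection onto it; alternatively one simply runs the whole frame algorithm in the Hilbert space $PW_{\omega}(\mathcal{L})$ itself, where $\{\xi_{j}\}$ — or rather their orthogonal projections onto $PW_{\omega}(\mathcal{L})$ — form a frame by (\ref{P-P}). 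Either way the proof is a short assembly: choose $\alpha$, apply Theorem \ref{Main-Th2} to get (\ref{P-P}), read off injectivity from the lower bound, and invoke (\ref{frame/reconstruction}) or (\ref{rec})--(\ref{conv}) for stable reconstruction.
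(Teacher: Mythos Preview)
Your proposal is correct and follows exactly the paper's approach: the paper itself simply notes (in the subsection immediately after the theorem) that Theorem~\ref{Main-Th2} establishes the frame property and then invokes (\ref{frame/reconstruction}) or the iterative scheme (\ref{rec}) for stable reconstruction---precisely your assembly. One small caveat: your first justification that $\Phi$ preserves $PW_{\omega}(\mathcal{L})$ (``being positive and bounded above and below on that subspace, it commutes with the projection'') is not valid in general, but your alternative---running the frame algorithm inside $PW_{\omega}(\mathcal{L})$ with the projected vectors $P\xi_{j}$, which share the same inner products against $f\in PW_{\omega}(\mathcal{L})$---is the correct fix and suffices.
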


\subsection{Reconstruction algorithms in terms of  frames}

	What we just proved in the previous section is that 
	under the same assumptions as above the set of functionals $f\rightarrow \left<f, \xi_{j}\right>$ is a frame in the subspace $PW_{\omega}(\mathcal{L})$.
	This fact allows to apply formula(\ref{frame/reconstruction}) which describes  a stable method of
reconstruction of a function $f\in PW_{\omega}(\mathcal{L})$ from a
 set of samples $\{\left<f,\xi_{j}\right>\}$.
Another possibility for reconstruction is to use the  frame algorithm given bt (\ref{rec}). 

\section{Average Variational Splines and a reconstruction algorithm}

\subsection{Variational interpolating splines}
As in the previous sections we assume that $G$ is a connected finite or infinite and countable graph and $\Xi=\{\Omega_{j}\}$ is  a disjoint cover of $V(G)$ by connected and finite subgraphs $\Omega_{j}$.

  For a  given sequence $\mathbf{v}=\{v_{j}\}\in l_{2}$ the set of all functions 
in  $L_{2}(G)$ such
 that $\left<f, \xi_{j}\right>=v_{j}$ will be denoted by $Z_{\mathbf{v}}^{k/2}$. In particular, $Z_{\mathbf{0}}^{k/2}$ corresponds to the sequence of zeros. 
 We consider the following optimization problem:

 \textit{For a given sequence $\mathbf{v}=\{v_{j}\}\in l_{2}$ find a function $f$ in
the set $Z_{\mathbf{v}}^{k/2}\subset L_{2}(G)$  which minimizes the functional
$$
u\rightarrow \|\mathcal{L}^{k/2}u\|, \>\>\>\>u\in Z_{\mathbf{v}}^{k/2}.
$$}
Similarly to Theorem \ref{unique} one can prove the following.
\begin{thm}
Under the above assumptions the optimization problem has a unique 
solution for every $k$.
\end{thm}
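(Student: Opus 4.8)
The plan is to mimic the proof of Theorem \ref{unique} almost verbatim, replacing the spaces $\mathcal{D}(\mathcal{L}^{k/2})$ by $L_2(G)$ and the averaging functionals $\mathcal{A}_j$ by the functionals $f\mapsto\langle f,\xi_j\rangle$ associated with the disjoint cover $\Xi=\{\Omega_j\}$. First I would observe that, since the cover is disjoint and each $\Omega_j$ is finite, the functionals $f\mapsto\langle f,\xi_j\rangle=|\Omega_j|^{-1/2}\sum_{v\in V(\Omega_j)}f(v)$ are bounded on $L_2(G)$ and in fact, as shown in the proof of Theorem \ref{Main-Th2}, $\sum_j|\langle f,\xi_j\rangle|^2\le\|f\|^2$, so that $\mathcal{P}_{k/2}:f\mapsto\{\langle f,\xi_j\rangle\}$ maps into $l_2$; moreover for any $\mathbf{v}=\{v_j\}\in l_2$ the function $f=\sum_j v_j|\Omega_j|^{-1/2}\chi_j$ lies in $L_2(G)$ and satisfies $\langle f,\xi_j\rangle=v_j$, so $Z_{\mathbf v}^{k/2}$ is always nonempty (here the disjointness of the $\Omega_j$ is what makes this trivial, in contrast to Assumption \ref{ass3} needed in the Dirichlet-space case).

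Next I would equip $L_2(G)$ (or, more precisely, the domain of $\mathcal{L}^{k/2}$, which for a finite graph or a graph with uniformly bounded degrees is all of $L_2(G)$ since $\mathcal{L}$ is bounded) with the inner product $\langle f,g\rangle+\langle\mathcal{L}^{k/2}f,\mathcal{L}^{k/2}g\rangle$, and let $P$ denote orthogonal projection onto the closed subspace $Z_{\mathbf 0}^{k/2}$. For a fixed $f\in Z_{\mathbf v}^{k/2}$ the function $g=f-Pf$ minimizes $\|u\|^2+\|\mathcal{L}^{k/2}u\|^2$ over $Z_{\mathbf v}^{k/2}$: uniqueness because any two elements of $Z_{\mathbf v}^{k/2}$ differ by an element of $Z_{\mathbf 0}^{k/2}$ and $f_1-Pf_1=f-Pf$ whenever $f_1=f+h$ with $h=Ph$; minimality because $\|g+h\|^2+\|\mathcal{L}^{k/2}(g+h)\|^2=(\|g\|^2+\|\mathcal{L}^{k/2}g\|^2)+(\|h\|^2+\|\mathcal{L}^{k/2}h\|^2)$ by orthogonality. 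To pass from this auxiliary functional to the actual functional $u\mapsto\|\mathcal{L}^{k/2}u\|$, which is only a seminorm, I would invoke the norm equivalence of Theorem \ref{Main-Th2}: the estimate $(1-\gamma)(1+\alpha)^{-1}\|f\|^2\le\sum_j|\langle f,\xi_j\rangle|^2\le\|f\|^2$ shows that on the relevant space the inner product $\langle f,g\rangle+\langle\mathcal{L}^{k/2}f,\mathcal{L}^{k/2}g\rangle$ is equivalent to $\sum_j\langle f,\xi_j\rangle\overline{\langle g,\xi_j\rangle}+\langle\mathcal{L}^{k/2}f,\mathcal{L}^{k/2}g\rangle$, whose associated norm restricted to $Z_{\mathbf 0}^{k/2}$ is exactly $\|\mathcal{L}^{k/2}\cdot\|$; running the projection argument in this equivalent Hilbert space structure then gives existence and uniqueness of the minimizer of $u\mapsto\|\mathcal{L}^{k/2}u\|$ over $Z_{\mathbf v}^{k/2}$.

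The one genuine subtlety — and the step I expect to need the most care — is the analogue of the global Poincar\'e inequality that underlies the norm equivalence: here it is the global Poincar\'e-type inequality (\ref{main-ineq}) of Theorem \ref{Main-Th}, whose hypothesis requires $\Lambda_\Xi=\inf_j\lambda_{1,j}>0$. So the statement of the theorem should be understood under the standing assumptions of Theorem \ref{Main-Th} (disjoint cover by finite connected subgraphs with $\Lambda_\Xi>0$), exactly as the phrase "Under the above assumptions" indicates, and I would make sure the parameter regime (e.g. choosing $\alpha$ so that $\gamma<1$ is not even needed here, since we only need the equivalence of the two inner products, which follows from $\sum_j|\langle f,\xi_j\rangle|^2\le\|f\|^2$ together with (\ref{main-ineq}) for \emph{any} $\alpha>0$ and the boundedness of $\mathcal{L}$) is stated cleanly. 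Everything else is a routine transcription of the Dirichlet-space argument, so the write-up can be kept to a short paragraph referring back to the proof of Theorem \ref{unique}.
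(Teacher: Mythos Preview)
Your proposal is correct and follows essentially the same route as the paper's own proof: both use the global Poincar\'e-type inequality of Theorem \ref{Main-Th} to justify that $\langle f,g\rangle_k=\sum_j\langle f,\xi_j\rangle\overline{\langle g,\xi_j\rangle}+\langle\mathcal{L}^{k/2}f,\mathcal{L}^{k/2}g\rangle$ is an inner product equivalent to the graph norm, and then take $f-P_0f$ with $P_0$ the orthogonal projection onto $Z_{\mathbf 0}^{k/2}$ in this inner product. The paper's write-up is terser (it skips the auxiliary step with the standard graph-norm inner product and goes straight to $\langle\cdot,\cdot\rangle_k$), while you add the useful observation that $Z_{\mathbf v}^{k/2}$ is nonempty for every $\mathbf v\in l_2$ thanks to the disjointness of the $\Omega_j$---a point the paper leaves implicit.
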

\begin{proof}Using Theorem \ref{Main-Th} one can justify the following algorithm:
\begin{enumerate}
 
 \item Pick any function $f\in Z_{\mathbf{v}}^{k/2}$. 
 
\item  Construct  $P_{0}f$ where $P_{0}$ 
is  the orthogonal projection of $f$  onto
 $Z_{\mathbf{0}}^{k/2}$ with respect to the inner product
 $$
 \left<f,g\right>_{k}= \sum_{j}\left<f, \xi_{j}\right>\left<g, \xi_{j}\right>+ \langle
\mathcal{L}^{k/2}f,
\mathcal{L}^{k/2}g \rangle.
$$
\item 
The function $f-P_{0}f$ is the unique solution to the given optimization
problem.
\end{enumerate}
\end{proof}

\begin{defn}
For $f\in L_{2}(G)$ the interpolating variational spline is denoted by $S_{k}(f)$  and it is the solution of the
minimization problem such that $S_{k}(f)-f\in Z_{\mathbf{0}}^{k/2}.$
\end{defn}
One can easily prove  the following characterization of  variational splines
\begin{thm}
A function $u\in L_{2}(G)$ is a variational spline  
if and only if $\mathcal{L}^{k}u$ is orthogonal to $\mathcal{L}^{k}Z_{\mathbf{0}}^{k/2}$.
\end{thm}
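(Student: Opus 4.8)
The plan is to adapt, almost verbatim, the least-squares/``normal equations'' argument already used for the analogous characterization on metric measure spaces, the only extra ingredient being that on a graph $\mathcal{L}$ (hence every power $\mathcal{L}^{k/2}$) is a \emph{bounded} self-adjoint operator on $L_{2}(G)$ by Lemma~\ref{grad-laplace} and the surrounding discussion, so that these operators are everywhere defined and may be moved freely across the inner product. Recall that $u\in L_{2}(G)$ is a variational spline precisely when $u=S_{k}(f)$ for some $f\in L_{2}(G)$; equivalently, setting $v_{j}=\langle f,\xi_{j}\rangle=\langle u,\xi_{j}\rangle$, the function $u$ lies in $Z_{\mathbf{v}}^{k/2}$ and minimizes $w\mapsto\|\mathcal{L}^{k/2}w\|$ over $Z_{\mathbf{v}}^{k/2}$. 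Since $f\mapsto\langle f,\xi_{j}\rangle$ is linear, $Z_{\mathbf{v}}^{k/2}$ is an affine subspace modelled on $Z_{\mathbf{0}}^{k/2}$, so the competitors of $u$ in the minimization are exactly the functions $u+h$ with $h\in Z_{\mathbf{0}}^{k/2}$.

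For the direction ``$u$ is a spline $\Rightarrow$ orthogonality'', fix $h\in Z_{\mathbf{0}}^{k/2}$ and consider the real quadratic
\begin{equation*}
\phi(\lambda)=\|\mathcal{L}^{k/2}(u+\lambda h)\|^{2}=\|\mathcal{L}^{k/2}u\|^{2}+2\lambda\,\mathrm{Re}\langle\mathcal{L}^{k/2}u,\mathcal{L}^{k/2}h\rangle+\lambda^{2}\|\mathcal{L}^{k/2}h\|^{2},\qquad\lambda\in\mathbb{R}.
\end{equation*}
Because $u+\lambda h\in Z_{\mathbf{v}}^{k/2}$ for all $\lambda$, minimality of $u$ forces $\phi$ to attain its minimum at $\lambda=0$, so its linear coefficient vanishes: $\mathrm{Re}\langle\mathcal{L}^{k/2}u,\mathcal{L}^{k/2}h\rangle=0$. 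Replacing $h$ by $ih\in Z_{\mathbf{0}}^{k/2}$ kills the imaginary part as well, hence $\langle\mathcal{L}^{k/2}u,\mathcal{L}^{k/2}h\rangle=0$ for every $h\in Z_{\mathbf{0}}^{k/2}$. By self-adjointness and boundedness of $\mathcal{L}$ this is the same as $\langle\mathcal{L}^{k}u,h\rangle=0$ for all such $h$, i.e. the asserted orthogonality of $\mathcal{L}^{k}u$ to $\mathcal{L}^{k}Z_{\mathbf{0}}^{k/2}$ (equivalently of $\mathcal{L}^{k/2}u$ to $\mathcal{L}^{k/2}Z_{\mathbf{0}}^{k/2}$).

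For the converse, suppose $\langle\mathcal{L}^{k/2}u,\mathcal{L}^{k/2}h\rangle=0$ for all $h\in Z_{\mathbf{0}}^{k/2}$. Then for any competitor $u+h$,
\begin{equation*}
\|\mathcal{L}^{k/2}(u+h)\|^{2}=\|\mathcal{L}^{k/2}u\|^{2}+2\,\mathrm{Re}\langle\mathcal{L}^{k/2}u,\mathcal{L}^{k/2}h\rangle+\|\mathcal{L}^{k/2}h\|^{2}=\|\mathcal{L}^{k/2}u\|^{2}+\|\mathcal{L}^{k/2}h\|^{2}\geq\|\mathcal{L}^{k/2}u\|^{2}.
\end{equation*}
Thus $u$ minimizes the functional over $Z_{\mathbf{v}}^{k/2}$ with $v_{j}=\langle u,\xi_{j}\rangle$, so $u=S_{k}(f)$ for any $f\in Z_{\mathbf{v}}^{k/2}$ and $u$ is a variational spline.

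I do not expect a genuine obstacle: the statement is just the first-order (stationarity) condition for a convex quadratic minimization, and the proof is the one-variable computation above. The only points deserving a word of care are that no domain restrictions intervene here — unlike in the Dirichlet-space part there is no need for an Assumption~\ref{ass3}-type hypothesis, since $\mathcal{L}$, and hence $\mathcal{L}^{k/2}$, is bounded on $L_{2}(G)$ so $Z_{\mathbf{v}}^{k/2}$ is automatically nonempty for every $\mathbf{v}\in\ell_{2}$ and every $k$ — and that one must pass to the complex perturbation $ih$ to recover the full (rather than just the real part of the) orthogonality relation.
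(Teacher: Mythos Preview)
Your proof follows the same quadratic-stationarity argument the paper uses for the analogous Dirichlet-space characterization (in the graph section the paper gives no separate proof, merely saying ``one can easily prove''): expand $\|\mathcal{L}^{k/2}(u+\lambda h)\|^{2}$ in $\lambda$, read off orthogonality from the vanishing of the linear term, and use Pythagoras for the converse. Your additional care with the complex perturbation $ih$ and the observation that boundedness of $\mathcal{L}$ removes all domain issues are apt refinements for the graph setting.

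One slip to flag: from $\langle\mathcal{L}^{k/2}u,\mathcal{L}^{k/2}h\rangle=0$ you correctly deduce $\langle\mathcal{L}^{k}u,h\rangle=0$, i.e.\ $\mathcal{L}^{k}u\perp Z_{\mathbf{0}}^{k/2}$, but this is \emph{not} the same as ``$\mathcal{L}^{k}u\perp\mathcal{L}^{k}Z_{\mathbf{0}}^{k/2}$'', which would mean $\langle\mathcal{L}^{2k}u,h\rangle=0$. Your ``equivalently'' is therefore unjustified. The exponent $k$ in the theorem's statement is almost certainly a typo for $k/2$ (compare the Dirichlet-space version), and what your argument actually establishes --- $\mathcal{L}^{k/2}u\perp\mathcal{L}^{k/2}Z_{\mathbf{0}}^{k/2}$ --- is the intended and correct condition.
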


\subsection{Reconstruction using splines}
By applying this Lemma and using the same reasoning as in the first part of the paper  one can prove the following  reconstruction theorem. Below we are keeping  notations of Theorem \ref{Main-Th2}.

\begin{thm}  If the assumptions of  Theorem \ref{Main-Th2}  are satisfied and in particular
$$
\gamma=\frac{1+\alpha}{\alpha}\frac{\omega}{\Lambda_{\Xi}}<1,\>\>\>\alpha>0,
$$
then any function $f$ in $PW_{\omega}(\mathcal{L}),\>\>\>\> \omega>0,$ can be reconstructed from a set of its averages $\{\left<f, \xi_{j}\right>\}$ using the
formula  
$$
f=\lim_{k\rightarrow\infty}S_{k}(f),\>\>\>\>k=2^{l},\>\>\>
l=0,1, ...,
$$
and the error estimate is
\begin{equation}\label{last}
\|f-S_{k}(f)\|\leq   2 \gamma^{k}\|f\|, \>\>\>\>k=2^{l},\>\>\>
l=0,1, ... .
\end{equation}

\end{thm}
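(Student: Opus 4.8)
The plan is to mimic the spline-approximation argument from the Dirichlet-space part (Theorem in Section~\ref{splines}), transporting it to the graph setting via the global Poincar\'e-type inequality (\ref{main-ineq}) and the ``many zeros'' consequence it yields. First I would record the graph analogue of the ``zeros inequality'' (\ref{zeros-ineq}): if $f-S_{k}(f)\in Z_{\mathbf{0}}^{k/2}$, then $\langle f-S_{k}(f),\xi_{j}\rangle=0$ for all $j$, so applying (\ref{main-ineq}) with any fixed $\alpha>0$ and using Lemma~\ref{grad-laplace} gives
\begin{equation}\label{graph-zeros}
\|f-S_{k}(f)\|^{2}\leq \frac{1+\alpha}{\alpha}\frac{1}{\Lambda_{\Xi}}\|\mathcal{L}^{1/2}(f-S_{k}(f))\|^{2}.
\end{equation}
Minimizing the constant over $\alpha>0$ is not needed here, but one notes that the coefficient $\frac{1+\alpha}{\alpha}\Lambda_{\Xi}^{-1}$ can be taken as a single constant $a^{2}$ depending only on $\Xi$ and $\alpha$, which is what feeds into the iteration lemma.

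Next I would apply Lemma~\ref{lemma} with $T=\mathcal{L}^{1/2}$ and $\varphi=f-S_{k}(f)$: since (\ref{graph-zeros}) has exactly the form $\|\varphi\|\leq a\|T\varphi\|$, for every $k=2^{l}$ we get $\|f-S_{k}(f)\|\leq a^{k}\|\mathcal{L}^{k/2}(f-S_{k}(f))\|$. Then the minimality of $S_{k}(f)$ in the functional $u\mapsto\|\mathcal{L}^{k/2}u\|$ over $Z_{\mathbf{v}}^{k/2}$ (with $\mathbf{v}=\{\langle f,\xi_{j}\rangle\}$, of which $f$ itself is a member) gives $\|\mathcal{L}^{k/2}S_{k}(f)\|\leq\|\mathcal{L}^{k/2}f\|$, hence by the triangle inequality $\|\mathcal{L}^{k/2}(f-S_{k}(f))\|\leq 2\|\mathcal{L}^{k/2}f\|$. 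Combining,
\begin{equation}\label{graph-approx}
\|f-S_{k}(f)\|\leq 2a^{k}\|\mathcal{L}^{k/2}f\|.
\end{equation}
Finally, if $f\in PW_{\omega}(\mathcal{L})$, the Bernstein inequality (\ref{Bern}) (iterated, as in (\ref{Bern100})) gives $\|\mathcal{L}^{k/2}f\|\leq\omega^{k/2}\|f\|$, and I would choose $\alpha>0$ as in the hypothesis so that $a^{2}\omega=\frac{1+\alpha}{\alpha}\frac{\omega}{\Lambda_{\Xi}}=\gamma<1$; substituting into (\ref{graph-approx}) yields the claimed estimate $\|f-S_{k}(f)\|\leq 2\gamma^{k}\|f\|$, and since $\gamma<1$ this forces $S_{k}(f)\to f$.

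The one genuine point requiring care---the ``main obstacle'' such as it is---is bookkeeping the relationship between the constant $a$ coming from (\ref{main-ineq}) and the quantity $\gamma$ appearing in the statement, so that the same $\alpha$ that makes the frame inequality (\ref{P-P}) work also makes the spline iteration converge; this is precisely why the theorem is phrased ``under the assumptions of Theorem~\ref{Main-Th2}.'' A secondary subtlety is that Lemma~\ref{lemma} only applies along the dyadic subsequence $k=2^{l}$, which is why the error bound and the limit are stated only for those $k$; I would simply carry that restriction throughout rather than attempt to remove it. Everything else---existence and uniqueness of $S_{k}(f)$, membership of $f$ in the relevant constraint set, the identity $\|\mathcal{L}^{1/2}g\|=\|\nabla g\|$---is already available from the earlier results and needs only to be cited.
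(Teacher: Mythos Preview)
Your argument is essentially identical to the paper's: apply the global Poincar\'e-type inequality (\ref{main-ineq}) to $f-S_{k}(f)\in Z_{\mathbf 0}^{k/2}$ to get the ``zeros'' estimate, iterate via Lemma~\ref{lemma} with $T=\mathcal L^{1/2}$, use the minimality of $S_{k}(f)$ together with the triangle inequality to pass to $\|\mathcal L^{k/2}f\|$, and finish with Bernstein. One small bookkeeping slip in your final line (which the paper's own presentation shares): from $\|f-S_k(f)\|\le 2a^{k}\omega^{k/2}\|f\|$ with $a^{2}\omega=\gamma$ you obtain $2(a\sqrt{\omega})^{k}\|f\|=2\gamma^{k/2}\|f\|$, not $2\gamma^{k}\|f\|$; this does not affect the convergence conclusion since $\gamma<1$.
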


\begin{proof}
Pick an $k=2^{l},\>\>l=0,1,2,....$ and apply  (\ref{main-ineq}) to the function $f-S_{k}(f)$. It gives for every $\alpha>0$:
$$
\|f-S_{k}(f)\| \leq \frac{1+\alpha}{\alpha}\frac{1}{\Lambda_{\Xi}}\|\mathcal{L}^{1/2}(f-S_{k}(f)\|^{2}.
$$
By Lemma \ref{lemma} it implies the following inequality 
$$
\left\|f-S_{k}(f)\right\|\leq \left( \frac{1+\alpha}{\alpha}\frac{1}{\Lambda_{\Xi}}\right)^{k} \|\mathcal{L}^{k/2}(f-S_{k}(f))\|^{2}.
$$
Using minimization property of $S_{k}(f)$ and the Bernstein inequality (\ref{Bern100}) for $f\in PW_{\omega}(\mathcal{L})$ one obtains (\ref{last}) with 
$$
\gamma=\frac{1+\alpha}{\alpha}\frac{\omega}{\Lambda_{\Xi}}.
$$
The assertion follows from the assumption that $\gamma<1$.
\end{proof}

\section{Example: Lattice $\mathbb{Z}$}

Let us  consider a one-dimensional infinite lattice $\mathbb{Z}=\{...,-1, 0, 1, ...\}$ as an inveighed graph.  The dual
group of the commutative additive group $\mathbb{Z}$ is the
one-dimensional torus. The corresponding Fourier transform
$\mathcal{F}$ on the space $L_{2}(\mathbb{Z})$   is defined by the
formula
$$
\mathcal{F}(f)(\xi)=\sum_{k\in \mathbb{Z}}f(k)e^{ik\xi}, f\in
L_{2}(\mathbb{Z}), \xi\in [-\pi, \pi).
$$
It gives a unitary operator from $L_{2}(\mathbb{Z})$ on the space
$L_{2}(\mathbb{T})=L_{2}(\mathbb{T}, d\xi/2\pi),$ where
$\mathbb{T}$ is the one-dimensional torus and $d\xi/2\pi$ is the
normalized measure.  One can verify the following formula
$$
\mathcal{F}(\mathcal{L}f)(\xi)=4\sin^{2}\frac{\xi}{2}\mathcal{F}(f)(\xi).
$$

The next result is obvious.
\begin{thm} The spectrum of the Laplace operator $\mathcal{L}$ on the one-dimensional
lattice $\mathbb{Z}$ is the interval  $[0, 4]$. A function $f$ belongs
to the space $PW_{\omega}(\mathbb{Z}), 0\leq \omega\leq 4,$ if and
only if the support of $\mathcal{F}f$ is a subset
 of $[-\pi,\pi)$ on which
$4\sin^{2}\frac{\xi}{2}\leq \omega$.

\end{thm}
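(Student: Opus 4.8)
The statement is purely a translation of the already-established relation $\mathcal F(\mathcal Lf)(\xi)=4\sin^2(\xi/2)\,\mathcal F(f)(\xi)$ through the spectral theorem, so the plan is to transport the definition of $PW_\omega(\mathcal L)$ across the unitary Fourier transform $\mathcal F\colon L_2(\mathbb Z)\to L_2(\mathbb T,d\xi/2\pi)$. First I would observe that, since $\mathcal F$ intertwines $\mathcal L$ with multiplication by the function $m(\xi)=4\sin^2(\xi/2)$, it also intertwines any bounded Borel function $\varphi(\mathcal L)$ of $\mathcal L$ with multiplication by $\varphi\circ m$; in particular $\mathbf 1_{[0,\omega]}(\mathcal L)$ corresponds to multiplication by $\mathbf 1_{[0,\omega]}\circ m=\mathbf 1_{\{\xi:\,m(\xi)\le\omega\}}$. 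Hence $\sqrt{\mathcal L}$ has the same spectral projections as $\mathcal L$ restricted to $[0,\omega^2]$, and a function $f$ lies in the range of $\mathbf 1_{[0,\omega]}(\sqrt{\mathcal L})$ exactly when $\mathcal Ff$ is supported on the set where $4\sin^2(\xi/2)\le\omega$.

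The spectrum claim follows from the same computation: the spectrum of $\mathcal L$ equals the essential range of the multiplier $m(\xi)=4\sin^2(\xi/2)$ on $[-\pi,\pi)$, and since $m$ is continuous and surjective onto $[0,4]$ (it attains $0$ at $\xi=0$ and $4$ at $\xi=\pm\pi$), the essential range is exactly $[0,4]$. I would spell this out in one line, noting that multiplication by $m$ on $L_2(\mathbb T)$ has spectrum equal to the closure of $m([-\pi,\pi))=[0,4]$, and that $\mathcal F$ being unitary preserves the spectrum.

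The two halves then combine: for $0\le\omega\le4$ the set $\{\xi\in[-\pi,\pi):4\sin^2(\xi/2)\le\omega\}$ is nonempty (it contains a neighborhood of $0$), and $f\in PW_\omega(\mathbb Z)$ iff $\mathbf 1_{[0,\omega]}(\sqrt{\mathcal L})f=f$ iff $(\mathbf 1_{\{m\le\omega\}})\cdot\mathcal Ff=\mathcal Ff$ iff $\operatorname{supp}\mathcal Ff\subset\{\xi:m(\xi)\le\omega\}$ up to a null set. This is the assertion.

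There is really no hard step here — the only thing requiring a word of care is the justification that the Fourier transform carries the operational calculus of $\mathcal L$ to the operational calculus of the multiplication operator, i.e. that unitary equivalence commutes with Borel functional calculus; this is standard spectral theory and I would simply cite it rather than prove it. A secondary minor point is to check that $4\sin^2(\xi/2)$ is indeed the symbol, which the paper has already recorded in the displayed formula just above the statement, so I would take it as given.
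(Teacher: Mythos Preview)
Your proposal is correct and is precisely the natural spelling-out of the argument the paper has in mind: the paper gives no proof at all, stating only that ``the next result is obvious,'' and your unitary-equivalence-plus-functional-calculus argument is exactly what makes it obvious. There is nothing to add.
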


 We consider  the cover $\Xi=\{\Omega_{j}\}$ of $\mathbb{Z}$ by disjoint sets $\Omega_{j}=\{\>j,\>j+1\}$ where $j$ runs over all even integers divisible by $3$: $\{..., -2, 0, 2, ... \}=2\mathbf{Z}$. 
 We treat every $\Omega_{j}$ as an induced graph whose set of vertices is $V(\Omega_{j})=\{\>j,\>j+1\},\>\>j\in 2\mathbf{Z},$ and which has only one  edge   $(j, \>j+1)$. Functional $\xi_{j}$ takes  form 
\begin{equation}\label{last-0}
 \langle f,\xi_{j}\rangle=\frac{1}{\sqrt{2}}\left(f(j)+f(j+1)\right),\>\>\>j\in 2\mathbf{Z,\>\>\>}f\in \ell^{2}(\mathbb{Z}).
\end{equation}
 One can check that  spectrum of the Laplace operator $\Delta_{j}$ on $\Omega_{j}$ defined by (\ref{L}) contains just  two values $\{0,\>2\}$. Thus $\Lambda_{\Xi}=2$.  For  an $0<\omega<4$ and $\alpha>0$  condition (\ref{gamma}) takes form
\begin{equation}\label{last}
 \gamma=\frac{1+\alpha}{\alpha}\frac{\omega}{2}<1.
\end{equation}
Note, that since for a large $\alpha$ the fraction $(1+\alpha)/\alpha$ is close to $1$  the condition implies that  one can have any $0<\omega<2$. 
As an application of Theorem \ref{conclusion} we obtain the following result.
\begin{thm} 
For every  $0<\omega<2$  every function $f\in PW_{\omega}(\mathcal{L})$ is uniquely determined by its average values (\ref{last-0}) and can be reconstructed from them in a stable way.

\end{thm}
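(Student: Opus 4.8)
The plan is to apply Theorem~\ref{conclusion} (the general sampling theorem for graphs by average values) to the specific cover $\Xi = \{\Omega_j\}_{j\in 2\mathbb{Z}}$ of the lattice $\mathbb{Z}$. The hypotheses of that theorem, via Theorem~\ref{Main-Th2}, require two things: first, that $\Xi$ is a disjoint cover of $V(\mathbb{Z})$ by connected finite subgraphs; second, that the uniform lower bound $\Lambda_\Xi = \inf_j \lambda_{1,j}$ on the first nonzero eigenvalues of the induced Laplacians is strictly positive; and third, that the resulting constant $\gamma = \tfrac{1+\alpha}{\alpha}\tfrac{\omega}{\Lambda_\Xi}$ can be made strictly less than $1$ for some $\alpha>0$.

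First I would check disjointness and covering: the sets $\Omega_j = \{j, j+1\}$ for $j$ ranging over the even integers indeed partition $\mathbb{Z}$ (each integer lies in exactly one such pair), and each $\Omega_j$ is connected as an induced subgraph since it carries the edge $(j,j+1)$ with weight $w(j,j+1)=1$. Next I would compute the spectrum of $\Delta_{\Omega_j}$. Since every $\Omega_j$ is a graph on two vertices joined by a single edge of weight $1$, the Laplacian in the basis of the two vertices is the matrix $\begin{pmatrix} 1 & -1 \\ -1 & 1 \end{pmatrix}$, whose eigenvalues are $0$ and $2$. Hence $\lambda_{1,j}=2$ for every $j$, so $\Lambda_\Xi = 2 > 0$, and the uniform spectral gap hypothesis of Theorem~\ref{Main-Th} is satisfied.

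With $\Lambda_\Xi = 2$, the condition $\gamma<1$ becomes $\tfrac{1+\alpha}{\alpha}\cdot\tfrac{\omega}{2}<1$. Since $\tfrac{1+\alpha}{\alpha}\to 1$ as $\alpha\to\infty$, for any fixed $\omega$ with $0<\omega<2$ one can choose $\alpha$ large enough that $\tfrac{1+\alpha}{\alpha}<\tfrac{2}{\omega}$, making $\gamma<1$. Therefore the hypotheses of Theorem~\ref{Main-Th2}, and hence of Theorem~\ref{conclusion}, hold for every $0<\omega<2$, and the conclusion --- that every $f\in PW_\omega(\mathcal{L})$ is uniquely determined by the averages $\langle f,\xi_j\rangle = \tfrac{1}{\sqrt{2}}(f(j)+f(j+1))$ and is stably reconstructible --- follows immediately.

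There is essentially no obstacle here: the only nontrivial computational step is identifying the eigenvalues of the two-point-graph Laplacian, which is elementary, and the only analytical point is the limiting behavior of $\tfrac{1+\alpha}{\alpha}$, which is trivial. The entire argument is a verification that the concrete example meets the abstract hypotheses already established. If one wanted to push the statement to the endpoint $\omega=2$ one would need to examine whether the strict inequality $\gamma<1$ can still be salvaged --- it cannot with this cover, since $\tfrac{1+\alpha}{\alpha}>1$ always --- so the restriction $\omega<2$ is intrinsic to this choice of $\Xi$ and should simply be stated as is.
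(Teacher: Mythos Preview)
Your proposal is correct and follows exactly the paper's own approach: verify that the two-vertex induced Laplacians all have spectrum $\{0,2\}$ so that $\Lambda_\Xi=2$, observe that for any $0<\omega<2$ one can choose $\alpha$ large enough to make $\gamma=\tfrac{1+\alpha}{\alpha}\tfrac{\omega}{2}<1$, and then invoke Theorem~\ref{conclusion}. Your write-up in fact supplies a bit more detail (the explicit $2\times 2$ matrix, the disjoint-cover verification, and the remark on the endpoint $\omega=2$) than the paper, which treats the theorem as an immediate application.
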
 
  In particular, if instead of infinite graph $\mathbb{Z}$ one would consider  a path graph $\mathbb{Z}_{N}$ whose eigenvalues are given by formulas $2-2\cos\frac{k\pi}{N},\>\>\>k=0,1, ..., N-1, $ the last Theorem  would mean that any eigenfunction with eigenvalue from a lower half of the spectrum  is uniquely determined and can be reconstructed from averages (\ref{last-0}).

\end{document}